\newcommand{\A}{\mathcal{A}}    
\newcommand{\B}{\mathscr{B}}    
\newcommand{\CC}{\mathbb{C}}    
\newcommand{\e}{\varepsilon}   
\newcommand{\E}{\mathcal{E}}    
\newcommand{\EE}{\mathbb{E}}     
\newcommand{\ind}{\mathbbm{1}} 
\newcommand{\II}{\mathbb{I}}   
\newcommand{\NN}{\mathbb{N}}
\newcommand{\N}{\mathcal{N}}   
\newcommand{\PP}{\mathbb{P}}    
\newcommand{\RR}{\mathbb{R}}    
\newcommand{\Z}{\ZZ^{d,n}}
\newcommand{\ZZ}{\mathbb{Z}}
\numberwithin{equation}{section}
\theoremstyle{plain}
\newtheorem{theorem}{Theorem}[section]
\newtheorem{lemma}[theorem]{Lemma}
\newtheorem{proposition}[theorem]{Proposition}
\newtheorem{corollary}[theorem]{Corollary}
\newtheorem*{remark}{Remark}
\theoremstyle{definition}
\title{\textbf{Branching diffusion processes and spectral properties of Feynman-Kac semigroup}}
\author{Pierre Collet\thanks{CPHT, Ecole Polytechnique, CNRS, Institut polytechnique de Paris, route de
Saclay, 91128 Palaiseau Cedex-France; E-mail:
pierre.collet@cpht.polytechnique.fr}\,,
Sylvie M\'el\'eard\thanks{CMAP, Ecole Polytechnique, CNRS, Institut polytechnique de Paris, Inria, route de
Saclay, 91128 Palaiseau Cedex-France and Institut Universitaire de France; E-mail: 
\textit{sylvie.meleard@polytechnique.edu}}\,,
Jaime San Mart\'in\thanks{CMM-DIM;  Universidad de Chile; UMI-CNRS 2807; 
Casilla 170-3 Correo 3 Santiago; Chile. email: jsanmart@dim.uchile.cl}\,.}
\begin{document}
\maketitle
\bigskip

\begin{abstract}
In this article we study the long time behavior of linear functionals of branching diffusion processes as well as the time reversal of the spinal process by means of spectral properties of the Feynman-Kac semigroup.  We generalize for this non Markovian semigroup the theory of quasi-stationary distribution (q.s.d.) and $Q$-process. The most amazing result is the identification of the law of the reversal time spinal process issued from q.s.d.  with  the $Q$-process of the Feynman-Kac semigroup.
\end{abstract}

Keywords: Stochastic differential equation - Eigenvalues asymptotics - quasistationary distribution - Q-process - spinal process - time reversal.

\section{Introduction and model}
We consider a population dynamics where individuals are characterized by a trait
$x\in \mathbb{R}$ (or a position) and give birth and die in continuous time. During their life the individual
trait (or position) variations are modelled by a diffusion operator.
More precisely we consider a branching-diffusion process $(Z_{t})$ defined as follows: 
an individual with trait $x$ gives birth to a (unique) new individual with the same trait at rate $b(x)$. 
and dies with rate $d(x)$.  Changes in
the trait during the  individuals life are driven by independent diffusion processes,
accounting for infinitesimal changes of the phenotypes (or of the positions). The underlying diffusion process is  given by 
\begin{equation}
\label{diffusion}
dX_{t}= dB_{t} - a(X_{t})dt.
\end{equation}
Let us denote by $$V(x) = b(x)-d(x)$$ the growth rate for an individual with trait $x$.

Denote by ${\cal V}_{t}$ the set of individuals alive at time $t$. Then 
the process $(Z_{t}, t\ge 0)$ is the point measure valued process on the trait space $\RR$, defined for any $t$ by
$$Z_{t} = \sum_{i\in  {\cal V}_{t}} \delta_{X^i_{t}},$$
where $X^i_{t}$ is the trait of individual $i$ at time $t$. We denote the total mass of the branching diffusion process by 
$$ N_{t}=\#{\cal V}_{t}=  \langle Z_{t} ,1\rangle.$$

The empirical measure $Z_{t}$ at time $t$ on the individual traits satisfies
a semi-martingale decomposition, and is the unique solution of a
stochastic equation driven by  independent  Poisson point measures and
Brownian motions (see  for example 
Champagnat-M\'el\'eard   \cite{champagnatmeleard2007}).

The main object of this article is to study the long time behavior for the linear functionals of this process, that is 
$\,\langle Z_t,f\rangle\,$
for suitable functions $f$ and its relation with the theory of quasi-stationary distributions. 
This is done through the well known relation between the  branching diffusion process $Z$ and the 
following Feynman-Kac semigroup $(P_t)$: for any $f\in C_{b}(\RR)$, $t\ge 0$, $x\in \RR$,
\begin{equation}
\label{FK} 
P_{t}f(x) =  \EE_{{\delta_{x}}}(\langle Z_{t},f \rangle) = \EE_{x}\Big(\exp\Big(\int_{0}^t V(X_{s})ds\Big)f(X_{t})\Big).
\end{equation}
where $X$ is the drifted Brownian motion defined in \eqref{diffusion}.
So a main tool in this article is the study of the long time behavior of this semigroup. Ideas are coming from the theory 
of quasi-stationary  distributions and $Q$-processes cf. Cattiaux et al. \cite{cattiaux} (see also \cite{BCMMS}). 
A main difference with these two cited articles
is that here we do not have a strong drift from infinity, but a strong 
potential at $\pm \infty$, and because of that, another important 
source of inspiration are the ideas developed by Kac in \cite{Kac}.

Note that \eqref{FK} can be generalized to trajectories if we introduce the historical process $(L_{t}, t\ge 0)$ (cf. 
\cite{dawsonperkinsAMS}, \cite{perkins} or \cite{meleardtran_suphist}), i.e. the point measure valued process on the 
trajectories, defined for any $t$  by
\begin{equation}
\label{historical}
    L_{t} = \sum_{i\in  {\cal V}_{t}} \delta_{X^i_{.\wedge t}}.
\end{equation}

Indeed, for any $T>0$ and $F: C([0,T],\RR) \to \RR$ a bounded and continuous function and $x\in \RR$, 
\begin{equation}
\label{FK2} 
\EE_{{\delta_{x}}}(\langle L_{T},F \rangle) = \EE_{x}\Big(\exp\Big(\int_{0}^T V(X_{s})ds\Big)F\left(X^T\right)\Big),
\end{equation}
where $X^T=(X_{t\wedge T})$. These formulas/theory  come from
\cite{kurtz1997conceptual,lyons1995conceptual} further developed for
instance in \cite{bansaye_limit_2011,cloez,hardyharris3,Marguet2}. Different  and  simpler proofs are given in \cite{calvez22}.

\medskip
We denote by $\mathscr{G}$ the generator of $(X_{t})_t$, that is, $\mathscr{G}(u)=\frac12 u''-au'$, and by 
$$
\mathscr{L}=\frac12 u''-au'+Vu,
$$
the generator of $(P_t)$.
In what follows, we also introduce the following function which will play a main role in the estimates, \begin{equation}
\label{drift}\ell(x)=\int_0^x a(z)\, dz.\end{equation}

\medskip

We make the following
assumptions on $a$, $\ell$  and  $V$:  
\begin{enumerate}[H0.1)]
\item $a\in C^1(\RR)$.
\item $|\ell~|$ has at most a linear growth at $\pm \infty$,  namely there exists constants $\beta,\gamma$ so that for all $x$
it holds $|\ell(x)|\le \gamma + \beta|x|$.
\item the function $a'-a^2$ is bounded from above.
\end{enumerate}
\begin{enumerate}[H1)]
\item $V$ is continuous.
\item $V$ is bounded from above, that is $A(V):=\sup\limits_{z\in \RR} V(z)<\infty$.
\item $\lim\limits_{|x|\to \infty} V(x)=-\infty$.
\item $V$ has at least a linear decay at $\pm \infty$, that is, there exist constants $E>0, x_0\ge 0$ so that 
for all $|x|\ge x_0$ it holds
$$
V(x)\le -E |x|.
$$
\end{enumerate}
We point out that $H1$ and $H4$ imply $H2$ and $H3$, but it will be
convenient for further references to keep this redundancy.
\medskip

This article is structured as follows. In Section \ref{sec:main} we shall state the main results, 
whose proofs given in Section 4, are based on the spectral properties of $(P_t)$ developed in Section 3. 
Subsection \ref{subsec:linear} is devoted to the long time behaviour for
linear functional of $(Z_t)$, while
Section \ref{subsec:spine} studies the spinal process for the branching diffusion.
Section 5 is devoted to the specific example developed in \cite{calvez22}, 
article that motivated the present work. 
 Note that we arrive more rapidly to this result 
than in \cite{calvez22}. This example shows the efficiency of our result. 

\medskip Our results were obtained with a drifted Brownian motion but it is not a real constraint and the diffusion \eqref{diffusion} can be chosen more general. Indeed, it is well known (cf. Karatzas-Shreve \cite{karatzasshreve}) that  a diffusion
$$
dY_{t}= \sigma(Y_{t})dB_{t} - a_{0}(Y_{t})dt,
$$
where $\sigma$ is assumed to be non vanishing and $C^1$, 
  can be reduced to the case
$\sigma=1$ and drift $a$ by the image-measure transformation on the trait space
(leading to a change of measure on the measure space), with the
diffeomorphism  $y\to  G(y)=\int_{0}^y \frac{1}{\sigma(u)}du$ 
with
$\ 
a=  \left(\frac{a_{0}}{\sigma} + \frac 12 \sigma'\right)\circ G^{-1}$,
and a new potential $W=V\circ G^{-1}$. The transformed diffusion $X_t=G(Y_t)$ and its 
Feynman-Kac semigroup satisfies
$\
P^Y_t(\phi)(y)=P^X_t(\phi\circ G^{-1})(G(y))$, which permits to study the long time behaviour of the original semigroup.

\medskip
In the last years, numerous works have been developed, studying the long time behavior of non Markovian semigroups. Some of them use the Harris approach, introducing either  Doeblin's condition  (cf. Champagnat-Villemonais \cite{champagnat} and all references of the authors issued from this seminal paper) or exploiting the existence of a  Lyapounov function (cf. \cite{Marguet1}, \cite{cloezgabriel},  \cite{bansayecloezgabriel}, \cite{bansayecloezmarguet}).
Our work gives the complete spectrum of the Feynman-Kac semigroup but the main point of the paper is to translate the longtime properties of this semigroup as properties for the branching-diffusion process. In a subsequent paper, we will explore more deeply these results in the case of critical branching -diffusion processes to obtain the  time scale at which the process goes to extinction. 

\bigskip
{\bf Acknowledgments:}
This work has been supported by the Chair Mod\'elisation Math\'ematique et Biodiversit\'e of Veolia - Ecole polytechnique - 
Museum national d'Histoire naturelle - Fondation X. It is also funded by the European Union (ERC AdG SINGER, 101054787). 
Views and opinions expressed are however those of the author(s) only and do not necessarily reflect those of the 
European Union or the European Research Council. Neither the
European Union nor the granting authority can be held responsible for them. JSM also acknowledge partial support from 
BASAL ANID FB210005, and he is very grateful for the hospitality and friendly working environment of CMAP at 
the \'Ecole Polytechnique, during visits to this center where this article was initiated.

\section{Main results}
\label{sec:main}

In this section we state the main results of this article, whose proofs are based on the spectral
properties of the Feynman-Kac semigroup $(P_{t})_t$. A main tool is provided by Theorem \ref{the:mainb} in Section \ref{subsec:main_tool}.

Since $(P_t)$ is, in general, not symmetric with respect to the Lebesgue measure, it is classical (see for example \cite{cattiaux}) 
to introduce a new reference measure which makes it symmetric, the so called speed measure
$$
\rho(dy)=e^{-2\ell(y)}dy,
$$
where $\ell$ has been defined in \eqref{drift}. 

\subsection{Long time behaviour for linear functionals of $(Z_t)$}
\label{subsec:linear}

We will prove in Section 3  the following result giving the long time behavior for linear functionals of the branching diffusion process. 

\begin{theorem}
\label{thm-BP}
    Under the assumptions $H0$-$H4$,
\begin{enumerate}[(1)]
\item There exists $\lambda_0\in \RR$ such that for any $x\in \RR$, 
$$\lim\limits_{t\to \infty} -\frac{\log\left(\EE_x\left(N_t\right)\right)}{t}= \lim\limits_{t\to \infty} -\frac{\log\left(\EE_x\left(e^{\int_0^t V(X_s)}\right)\right)}{t}=\lambda_0.$$
\item   There exists a positive function $\Theta_0\in L^2(d\rho)$ defined on $\RR$ such that for all $\phi\in C(\RR)$ that has at most exponential linear growth at $\pm \infty$ and for any $x\in \RR$, it holds
$$
\lim\limits_{t\to \infty} e^{\lambda_0 t}\, \EE_x\left(\langle Z_t,\phi\rangle)\right)=
\lim\limits_{t\to \infty} e^{\lambda_0 t}\, \EE_x\left(e^{\int_0^t V(X_s)} \phi(X_t)\right)=
\Theta_0(x) \int \Theta_0(z)\phi(z)\, e^{-2\ell(z)} dz.
$$
In particular, we have the following asymptotic for the total mass 
$$
\lim\limits_{t\to \infty} e^{\lambda_0 t}\, \EE_x\left(N_t\right)=
\lim\limits_{t\to \infty} e^{\lambda_0 t}\, \EE_x\left(e^{\int_0^t V(X_s)} \right)=
\Theta_0(x) \int \Theta_0(z)\, e^{-2\ell(z)} dz,
$$
and the following ratio limit: 
$$
\lim\limits_{t\to \infty} \frac{ \EE_x\left(\langle Z_t,\phi\rangle,  N_t>0)\right)}{\EE_x\left(N_t\right)}
=\frac{\int \phi(z)\, \Theta_0(z)  e^{-2\ell(z)} dz}{\int \Theta_0(z)\, e^{-2\ell(z)} \, dz}.
$$
\end{enumerate}
\end{theorem}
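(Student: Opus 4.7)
The plan is to reduce both statements to the spectral analysis of the Feynman-Kac semigroup $(P_t)$ via the many-to-one identity \eqref{FK}: we have $\EE_x(\langle Z_t,\phi\rangle) = P_t\phi(x)$ for any admissible $\phi$, and the choice $\phi\equiv 1$ recovers $\EE_x(N_t)$. Therefore the two exponential rates in (1) are equal by construction, and (2) is equivalent to obtaining the leading-order large-$t$ asymptotics of $P_t\phi(x)$ together with the identification of the prefactor.

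The heart of the argument will invoke Theorem \ref{the:mainb} (the main tool of Section \ref{subsec:main_tool}), which I expect to provide: symmetry of $P_t$ on $L^2(d\rho)$, compactness (indeed, Hilbert-Schmidt character) of $P_t$ for every $t>0$, and hence a discrete spectrum $(e^{-\lambda_k t})_{k\ge 0}$ with a simple smallest eigenvalue $\lambda_0$ and a strictly positive eigenfunction $\Theta_0\in L^2(d\rho)$ obtained via Perron-Frobenius/Krein-Rutman. The spectral expansion
$$
P_t\phi \;=\; \sum_{k\ge 0} e^{-\lambda_k t}\ppar{\phi, \Theta_k}_\rho\, \Theta_k,
$$
combined with the spectral gap $\lambda_1>\lambda_0$, applied to $\phi\in L^2(d\rho)$ immediately yields
$$
e^{\lambda_0 t}\,P_t\phi(x) \;\longrightarrow\; \Theta_0(x)\int \Theta_0(z)\phi(z)\, e^{-2\ell(z)}\, dz,
$$
which is precisely the claim of (2) restricted to test functions in $L^2(d\rho)$.

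The main obstacle is that part (2) allows $\phi$ with exponential linear growth at $\pm\infty$, whereas $L^2(d\rho)$ only tolerates growth milder than $e^{|\ell|}$, which can be too restrictive if $\beta$ from H0.2) is small. To bridge this gap I would first derive an Agmon-type decay estimate for $\Theta_0$: using H4 ($V(x)\le -E|x|$ for large $|x|$), a comparison argument on the eigenfunction equation $\mathscr{L}\Theta_0 = -\lambda_0\Theta_0$ shows that $\Theta_0$ decays faster than any exponential (in fact like $\exp(-c|x|^{3/2})$), so $\int \Theta_0(z)|\phi(z)|\, e^{-2\ell(z)}\, dz<\infty$ for every $\phi$ of exponential linear growth. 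Then, using the semigroup identity $P_t\phi = P_s(P_{t-s}\phi)$ for a fixed small $s>0$ together with the kernel decay of $P_s$ (again coming from H4), I would show that $P_s\phi\in L^2(d\rho)$ and bootstrap the $L^2$-convergence above to a pointwise statement; the rough a priori bound $P_t|\phi|(x)\le e^{tA(V)}\EE_x(|\phi(X_t)|)$ from H2 provides the uniform control needed to justify the exchange of limits.

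Finally, the ratio limit is obtained by dividing the two asymptotics in (2): the prefactor $\Theta_0(x)$ cancels, and the restriction $\{N_t>0\}$ is harmless since $\langle Z_t,\phi\rangle\,\mathbf{1}_{\{N_t=0\}}=0$. As for (1), once $e^{\lambda_0 t}\,\EE_x(N_t)\to \Theta_0(x)\int \Theta_0(z)\,e^{-2\ell(z)}\,dz>0$ is established, taking $\log$ and dividing by $-t$ gives $-\tfrac{1}{t}\log\EE_x(N_t)\to\lambda_0$, which completes both assertions.
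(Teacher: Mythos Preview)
Your proposal is correct and follows the same strategy as the paper: reduce everything to Theorem~\ref{the:mainb} via the many-to-one formula~\eqref{FK}, read off part~(2) from the convergence $e^{\lambda_0 t}P_t\phi\to\Pi\phi$, and deduce part~(1) and the ratio limit as immediate corollaries. The one superfluous detour is your bootstrap from $L^2(d\rho)$ to the exponential-growth class: Theorem~\ref{the:mainb} part~(2) is stated \emph{directly} for functions $g$ with $|g(x)|\le Ae^{\kappa|x|}$, giving $\|e^{\lambda_0 t}P_t g-\Pi g\|_\infty\le AD(\kappa)e^{-(\lambda_1-\lambda_0)t}$, so no regularization via $P_s$ is needed. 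The reason this works in the paper is that the eigenfunction decay \eqref{eq:boundPsik2} in Proposition~\ref{pro:1} (essentially your Agmon estimate, but proved probabilistically and uniformly in $k$) is strong enough to control $\int|\Psi_k(y)|e^{-\ell(y)}|g(y)|\,dy$ term-by-term in the spectral series, bypassing any $L^2$ intermediate step.
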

In what follows we denote by $\nu$ the
probability measure on $\RR$, appearing in $(3)$, given by
$$
\nu(dx)=\frac{\Theta_0(x) e^{-2\ell(x)}}{\int \Theta_0(y) e^{-2\ell(y)}\,dy} dx,
$$
which it turns out to be the unique quasi-stationary distribution for $(P_t)$ 
(see Theorem \ref{the:qsd} and Corollary \ref{cor:uniqueqsd}). 
We notice that $\nu$ is well defined because $\Theta_0$ has exponential linear decay, of any order, at $\pm \infty$ and $\ell$ has at most a fixed linear growth at $\pm \infty$ (see Corollary \ref{cor:density} and formula \eqref{eq:boundPsik2}).

On the other hand, $\Theta_0$ is a positive eigenfunction for the 
semigroup $(P_t)$, acting in $L^2(\rho(dx))$ (see Corollary \ref{cor:density}) associated to $-\lambda_0$, that is,
for all $t>0$ it holds
$$
P_t \Theta_0=e^{-\lambda_0 t}\Theta_0.
$$

\bigskip \noindent 
{\bf Main remark}: The sign of $\lambda_0$ can be positive or negative. We will say that 
the process is supercritical (resp. subcritical or critical) if $\lambda_0<0$ (resp. $\lambda_0>0$ 
or $\lambda_0=0$).  In Section 4, it will be proved that
$$\lambda_0 > - \sup\limits_x (V(x) + 1/2(a'-a^2)(x))= -\sup\limits_x \tilde V(x).$$
Therefore if there exists a region (which can be small) such that $\widetilde V$ is positive, 
then $\lambda_0$ could be negative and the process supercritical with a mass exploding as $e^{-\lambda_0 t}$.

On the other hand, if the  potential $V$ is positive in some region, the process can be either critical, subcritical or 
supercritical depending on the drift of the underlying diffusion. For example, if $a=0$ and $V$ such that 
$\lambda_0<0$, the process is supercritical. Nevertheless  adding a suitable drift (for example a constant large enough) 
can make  the process  subcritical. The drift will push the individuals to have traits larger and larger and less and 
less  adapted. In the long run, we have extinction of the population. 
\medskip

\subsection{Time reversal of the spinal process}
\label{subsec:spine} 
In this section we will establish a result on the time reversal for the spinal process. Previous to state it and using 
the spectral properties that we will developed,
we will go deeper in the quasi-stationary (q.s.d.) properties of the semigroup $(P_t)$. In particular, we shall prove the 
existence and uniqueness of a q.s.d. distribution and the characterization of the $Q$-process. The latter will 
be related to the distribution of the reverse in time for the spinal process done in Theorem \ref{the:spine}.

\begin{theorem}
\label{the:qsd}
Assume that $H0$-$H4$ holds, then $\nu$ is a q.s.d. for the semigroup $(P_t)$,
that is, for all $t>0$ and all $\phi$ bounded and measurable functions,  it holds
$$
\int \EE_x\left(e^{\int_0^t V(X_s)ds} \phi(X_t)\right) \nu(dx)=e^{-\lambda_0 t}\int \phi(x) \nu(dx).
$$
In particular, we have
$$
\frac{\int \EE_x\left(e^{\int_0^t V(X_s)} \phi(X_t)\right) \nu(dx)}
{\int \EE_x\left(e^{\int_0^t V(X_s)}\right) \nu(dx)}= \int \phi(x) \nu(dx). 
$$
\end{theorem}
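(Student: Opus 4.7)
The plan is to leverage the self-adjointness of $(P_t)$ in $L^2(\rho)$ together with the eigenfunction identity $P_t\Theta_0 = e^{-\lambda_0 t}\Theta_0$ established through the spectral analysis of Section 3. The speed measure $\rho(dx)=e^{-2\ell(x)}dx$ was introduced precisely so that the formal generator admits the divergence-form rewriting
$$
\mathscr{L}u \;=\; \tfrac12\, e^{2\ell(x)}\frac{d}{dx}\!\left(e^{-2\ell(x)}u'(x)\right) + V(x)u(x),
$$
which is manifestly symmetric with respect to $\rho$. The Kac-type framework of Section 3 transfers this symmetry to the semigroup, giving $\int (P_t\phi)\psi\,d\rho=\int\phi\,(P_t\psi)\,d\rho$ for all $\phi,\psi\in L^2(\rho)$.

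First I would specialise this symmetry to $\psi=\Theta_0$, which lies in $L^2(\rho)$ by Corollary \ref{cor:density}. Combined with the eigenfunction relation $P_t\Theta_0=e^{-\lambda_0 t}\Theta_0$, this produces, for every $\phi\in L^2(\rho)$,
$$
\int P_t\phi(x)\,\Theta_0(x)\,\rho(dx) \;=\; \int \phi(x)\,P_t\Theta_0(x)\,\rho(dx) \;=\; e^{-\lambda_0 t}\int \phi(x)\,\Theta_0(x)\,\rho(dx).
$$
Dividing both sides by the normalising constant $Z:=\int \Theta_0\,d\rho$ yields exactly
$$
\int P_t\phi\,d\nu \;=\; e^{-\lambda_0 t}\int\phi\,d\nu,
$$
which, by the Feynman-Kac representation \eqref{FK}, is the claimed quasi-stationarity identity. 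The ratio identity then follows by applying the same formula with $\phi\equiv 1$ (noting $Z^{-1}\int \EE_x[\exp(\int_0^t V(X_s)ds)]\Theta_0(x)\rho(dx)=e^{-\lambda_0 t}$) and forming the quotient.

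Second, I would extend the identity from $\phi\in L^2(\rho)$ to all bounded measurable $\phi$ (which may fail to lie in $L^2(\rho)$ when $\rho$ is infinite, as is possible here since $\ell$ only has linear growth). This is done by truncating: set $\phi_R=\phi\,\ind_{[-R,R]}\in L^2(\rho)$, apply the identity to $\phi_R$, and pass to the limit $R\to\infty$ using dominated convergence. The dominations are the uniform bound $|P_t\phi_R(x)|\le \|\phi\|_\infty e^{A(V)t}$, coming from hypothesis $H2$, together with $\Theta_0\in L^1(\rho)$, which is ensured because $\nu$ is a probability (equivalently because $\Theta_0$ has exponential decay at infinity by Corollary \ref{cor:density} while $\ell$ grows at most linearly).

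The main obstacle is really just the last step of extending beyond $L^2(\rho)$: the spectral content of the result is packaged into the eigenfunction identity and the self-adjointness both proved in Section 3, so no genuine analytic difficulty arises beyond this routine truncation argument.
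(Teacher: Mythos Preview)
Your proof is correct and conceptually equivalent to the paper's, though the presentation differs slightly. The paper argues via the explicit spectral expansion of the density $r(t,x,y)=\sum_k e^{-\lambda_k t}\Theta_k(x)\Theta_k(y)$: for $\phi\ge 0$ with compact support it writes $\int P_t\phi\cdot\Theta_0\,d\rho$ using this series, applies Fubini, and uses the $L^2(\rho)$-orthonormality of $(\Theta_k)_k$ so that only the $k=0$ term survives; it then extends to bounded $\phi$ by identifying two finite measures that agree on compactly supported test functions. You instead invoke the symmetry of $P_t$ in $L^2(\rho)$ (which is exactly what the symmetric density $r$ in Corollary~\ref{cor:density} encodes) together with the single eigenfunction identity $P_t\Theta_0=e^{-\lambda_0 t}\Theta_0$, bypassing the full series. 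This is marginally cleaner since it does not require controlling the convergence of the expansion, but the substance is the same: both arguments rest on the fact that $\Theta_0\,d\rho$ is a left eigenmeasure, and both handle the extension to bounded $\phi$ by an approximation-plus-domination step using $\Theta_0\in L^1(\rho)$ and $\|P_t\phi\|_\infty\le e^{A(V)t}\|\phi\|_\infty$.
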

In the following result we study the domain of attraction of $\nu$. In particular, we prove that for any 
initial probability measure $\mu$
supported on $\RR$, the conditional evolution starting in $\mu$ converges to $\nu$. 

\begin{theorem} 
\label{the:domain}
Assume that $H0$-$H4$ holds and assume $\mu$ is a measure on $\RR$, so that $\int e^{-R|x|} \mu(dx)<\infty$, for some $R<\infty$. The
``conditional'' evolution $(\mu_t)_{t>0}$ given by: for all bounded and measurable $\phi$, and
$t>0$
$$
\int \phi(y) \mu_t(dy)=\frac{\int \EE_x\left(e^{\int_0^t V(X_s)ds} \phi(X_t)\right) \mu(dx)}
{\int \EE_x\left(e^{\int_0^t V(X_s)ds}\right) \mu(dx)},
$$
is a well defined probability measure on $\RR$, and 
$$
\lim\limits_{t\to \infty} \int \phi(y) \mu_t(dy)=\int \phi(y) \nu(dy).
$$
\end{theorem}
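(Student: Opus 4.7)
The plan is to reduce Theorem \ref{the:domain} to the pointwise asymptotics of Theorem \ref{thm-BP}(2) via a dominated convergence argument on both the numerator and denominator in the definition of $\mu_t$. By Fubini, the numerator of $\int\phi\,d\mu_t$ equals $\int_\RR P_t\phi(x)\,\mu(dx)$ and the denominator equals $\int_\RR P_t\mathbf{1}(x)\,\mu(dx)$, so well-definedness reduces to showing both integrals are finite and the denominator strictly positive. Positivity is immediate since $P_t\mathbf{1}>0$ pointwise; finiteness is part of the envelope step described below. Multiplying numerator and denominator by $e^{\lambda_0 t}$, Theorem \ref{thm-BP}(2) yields for every $x\in\RR$
\[
e^{\lambda_0 t}P_t\phi(x)\ \xrightarrow[t\to\infty]{}\ \Theta_0(x)\int \Theta_0(z)\phi(z)\,e^{-2\ell(z)}\,dz,
\]
and analogously with $\phi$ replaced by $\mathbf{1}$. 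If one can interchange this limit with integration against $\mu$, the ratio immediately gives $\int\phi\,d\nu$.

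The main technical obstacle is thus to justify this interchange, which calls for a $t$-uniform envelope $\abs{e^{\lambda_0 t}P_t\phi(x)}\le C(\phi)\,\Psi(x)$ with $\Psi\in L^1(\mu)$. For this I would exploit the spectral analysis developed in Section 3, which (upon symmetrization by $\rho$) provides an eigendecomposition $P_t=\sum_k e^{-\lambda_k t}\Theta_k\otimes\Theta_k$ in $L^2(\rho)$ with a strict spectral gap $\lambda_1>\lambda_0$. This gives, for bounded $\phi$,
\[
e^{\lambda_0 t}P_t\phi(x) - \Theta_0(x)\int\Theta_0\phi\,d\rho \;=\; \sum_{k\ge 1} e^{-(\lambda_k-\lambda_0)t}\Theta_k(x)\int\Theta_k\phi\,d\rho.
\]
The key ingredient is then the exponential linear decay of the eigenfunctions $\Theta_k$ at $\pm\infty$, of arbitrarily large order, which is precisely what Corollary \ref{cor:density} and the bound \eqref{eq:boundPsik2} referenced in the text after Theorem \ref{thm-BP} supply. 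Choosing the decay rate $r>R$, where $R$ is the rate in the hypothesis $\int e^{-R|x|}\mu(dx)<\infty$, produces an envelope $\Psi(x)\le C e^{-r|x|}$ that is $\mu$-integrable, uniformly in $t\ge t_0$ for some $t_0$.

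With this domination in hand, dominated convergence gives
\[
e^{\lambda_0 t}\int P_t\phi(x)\,\mu(dx)\ \longrightarrow\ \left(\int\Theta_0(x)\,\mu(dx)\right)\left(\int\Theta_0(z)\phi(z)\,e^{-2\ell(z)}\,dz\right),
\]
and the analogous limit for $\phi\equiv\mathbf{1}$. The prefactor $\int\Theta_0\,d\mu$ is finite by the same envelope and is strictly positive since $\Theta_0>0$ on $\RR$ and $\mu$ is nontrivial; therefore it cancels in the ratio, yielding
\[
\lim_{t\to\infty}\int\phi\,d\mu_t \;=\; \frac{\int\Theta_0(z)\phi(z)\,e^{-2\ell(z)}\,dz}{\int\Theta_0(z)\,e^{-2\ell(z)}\,dz} \;=\; \int\phi\,d\nu,
\]
which is the claim. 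The hardest step is therefore the envelope bound: the naive spectral series estimate is conditional on sufficient $L^\infty$ control of each $\Theta_k$ in $x$, so one may need to combine it with a direct Feynman--Kac large-deviation bound exploiting $V(x)\to-\infty$ (hypothesis $H4$) to control $P_t\mathbf{1}(x)$ for $|x|$ large uniformly in $t$.
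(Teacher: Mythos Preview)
Your strategy is correct and coincides with the paper's: multiply numerator and denominator by $e^{\lambda_0 t}$, use the pointwise limit from Theorem \ref{thm-BP}(2), and pass to the limit under $\int\cdot\,\mu(dx)$ by dominated convergence with an envelope of the form $C\,e^{-R|x|}$, which lies in $L^1(\mu)$ by hypothesis. The paper obtains this envelope in one stroke from the density bound \eqref{eq:bound2} in Corollary \ref{cor:density}(3), namely $p(t,x,y)\le F\,e^{-\lambda_0(t-2s_0)}e^{-R'(|x|+|y|)}$ with $R'=R$, which after integrating in $y$ against a bounded $\phi$ gives exactly $|e^{\lambda_0 t}P_t\phi(x)|\le K(R,t_0)\,\|\phi\|_\infty\,e^{-R|x|}$ for all $t$ large; your reconstruction via the spectral series and \eqref{eq:boundPsik2} is the same computation unpacked, so your final hedging about possibly needing a separate Feynman--Kac large-deviation bound is unnecessary.
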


\begin{corollary} 
\label{cor:uniqueqsd}
$\nu$ is the unique q.s.d. for the semigroup $(P_t)$.
\end{corollary}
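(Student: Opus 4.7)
The plan is to deduce uniqueness as an immediate consequence of Theorem \ref{the:domain}, which asserts that every reasonable initial probability measure is attracted to $\nu$ under the conditional evolution. The key observation is that any q.s.d. is a fixed point of this conditional evolution, so it must coincide with the unique attractor $\nu$.

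More precisely, let $\tilde{\nu}$ be any q.s.d.\ for $(P_t)$. By the defining property of a quasi-stationary distribution (as in Theorem \ref{the:qsd}), there exists a constant $\tilde{\lambda}\in\RR$ such that for every $t>0$ and every bounded measurable $\phi$,
$$
\int \EE_x\!\left(e^{\int_0^t V(X_s)\,ds}\,\phi(X_t)\right)\tilde{\nu}(dx)
= e^{-\tilde{\lambda}t}\int \phi(x)\,\tilde{\nu}(dx).
$$
Taking $\phi\equiv 1$ shows that the normalizing denominator $\int \EE_x(e^{\int_0^t V(X_s)\,ds})\,\tilde{\nu}(dx)=e^{-\tilde{\lambda}t}>0$, so the conditional evolution $\tilde{\nu}_t$ from Theorem \ref{the:domain} started at $\mu=\tilde{\nu}$ is well defined, and the above display gives $\tilde{\nu}_t=\tilde{\nu}$ for every $t>0$.

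Next I would check that Theorem \ref{the:domain} applies with $\mu=\tilde{\nu}$. Since $\tilde{\nu}$ is a probability measure on $\RR$, we may take, say, $R=0$ (or any $R\geq 0$): then $e^{-R|x|}\leq 1$ yields $\int e^{-R|x|}\tilde{\nu}(dx)\leq 1<\infty$, so the hypothesis of Theorem \ref{the:domain} is automatically satisfied. Consequently,
$$
\int \phi(y)\,\tilde{\nu}(dy)=\lim_{t\to\infty}\int \phi(y)\,\tilde{\nu}_t(dy)=\int \phi(y)\,\nu(dy)
$$
for every bounded measurable $\phi$, hence $\tilde{\nu}=\nu$.

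There is essentially no obstacle here: the nontrivial work is entirely absorbed into Theorem \ref{the:domain}. The only minor point to be careful about is verifying the integrability hypothesis of Theorem \ref{the:domain}, which is trivial for probability measures because the weight $e^{-R|x|}$ is bounded by $1$ as soon as $R\geq 0$; one should also note that we do not need to know the value of $\tilde\lambda$ (it must equal $\lambda_0$ a posteriori, since both governing eigenvalues coincide once $\tilde\nu=\nu$).
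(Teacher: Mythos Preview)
Your proof is correct and follows exactly the approach the paper intends: the corollary is stated without proof immediately after Theorem \ref{the:domain}, and the argument is precisely that any q.s.d.\ is a fixed point of the conditional evolution $(\mu_t)$, hence equals the limit $\nu$. The verification that the integrability hypothesis of Theorem \ref{the:domain} is trivially satisfied for a probability measure is the only detail to check, and you handled it correctly.
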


The next result gives the existence of the $Q$-process.

\begin{theorem} 
\label{the:Q-process}
Assume that $H0$-$H4$ holds, then for all $s>0$ fixed, for all 
$\phi$ bounded measurable function and for all $x$
$$
\lim\limits_{t\to \infty} \frac{\EE_x\left(e^{\int_0^t V(X_u) du} \phi(X_s)\right)}
{\EE_x\left(e^{\int_0^t V(X_u) du}\right)}=\EE_x\left(e^{\int_0^s V(X_u) du} \,
\frac{e^{\lambda_0 s} \Theta_0(X_s)}{\Theta_0(x)} \,  \phi(X_s)\right)
$$
\end{theorem}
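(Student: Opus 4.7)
The strategy is a direct application of the Markov property of the underlying diffusion $(X_u)$ combined with the sharp pointwise asymptotics of $P_t 1$ already established in Theorem \ref{thm-BP}(2).

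First I would condition at the fixed time $s$ to decompose the numerator:
$$\EE_x\Big(e^{\int_0^t V(X_u) du} \phi(X_s)\Big) = \EE_x\Big(e^{\int_0^s V(X_u) du} \phi(X_s) \, P_{t-s} 1(X_s)\Big),$$
identifying the conditional factor $\EE_{X_s}(e^{\int_0^{t-s} V(X_u) du}) = P_{t-s} 1(X_s)$. The denominator is $P_t 1(x)$. Multiplying numerator and denominator by $e^{\lambda_0 t}$ and splitting $e^{\lambda_0 t} = e^{\lambda_0 s} \cdot e^{\lambda_0(t-s)}$, the ratio becomes
$$e^{\lambda_0 s} \cdot \frac{\EE_x\big(e^{\int_0^s V(X_u) du} \phi(X_s) \, [e^{\lambda_0 (t-s)} P_{t-s} 1(X_s)]\big)}{e^{\lambda_0 t} P_t 1(x)}.$$

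By Theorem \ref{thm-BP}(2) applied pointwise, as $t \to \infty$ one has $e^{\lambda_0 t} P_t 1(x) \to \Theta_0(x) \int \Theta_0(z) e^{-2\ell(z)} dz$, and, for each $y$, $e^{\lambda_0(t-s)} P_{t-s} 1(y) \to \Theta_0(y) \int \Theta_0(z) e^{-2\ell(z)} dz$. Passing the limit through the outer expectation and cancelling the common factor $\int \Theta_0(z) e^{-2\ell(z)} dz$ immediately yields the claimed limit
$$\EE_x\Big( e^{\int_0^s V(X_u) du} \, \frac{e^{\lambda_0 s} \Theta_0(X_s)}{\Theta_0(x)} \, \phi(X_s)\Big).$$

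The main obstacle is the rigorous justification of this interchange of limit and expectation, which needs a uniform dominating function for $\{e^{\lambda_0(t-s)} P_{t-s} 1(y) : t \text{ large}\}$ that is integrable against the finite measure $\EE_x(e^{\int_0^s V(X_u) du} |\phi(X_s)| \, \cdot\,)$. Since $s$ is fixed and $\phi$ is bounded, hypothesis $H2$ provides $e^{\int_0^s V(X_u) du} \le e^{A(V) s}$, so it is enough to produce a pointwise bound of the form $e^{\lambda_0(t-s)} P_{t-s} 1(y) \le K\, \Theta_0(y)$ for $t$ sufficiently large, together with $\EE_x(\Theta_0(X_s)) < \infty$. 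The first ingredient should be a quantitative strengthening of Theorem \ref{thm-BP}(2) delivered by the spectral analysis of Section 3, of the form $|e^{\lambda_0 t} P_t 1 - (\int \Theta_0 \, d\rho) \Theta_0| \le C e^{-\delta t} \Theta_0$ coming from the spectral gap of $P_t$ on $L^2(\rho)$; the second follows from the exponential linear decay of $\Theta_0$ at $\pm\infty$ mentioned after Theorem \ref{thm-BP}, combined with the standard Gaussian-type moment bounds for the drifted Brownian motion $(X_u)$ available under $H0$.
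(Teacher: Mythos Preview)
Your overall strategy---Markov property at time $s$, pointwise convergence of $e^{\lambda_0(t-s)}P_{t-s}1$ via Theorem~\ref{thm-BP}(2), then dominated convergence---is exactly the paper's approach. The only divergence is in how you justify the domination, where you overshoot.

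You propose a bound of the form $e^{\lambda_0(t-s)}P_{t-s}1(y) \le K\,\Theta_0(y)$, backed by a conjectured weighted estimate $|e^{\lambda_0 t}P_t1 - (\int\Theta_0\,d\rho)\,\Theta_0| \le Ce^{-\delta t}\Theta_0$. This weighted bound is \emph{not} established anywhere in the paper (Theorem~\ref{the:mainb} gives only $L^\infty$ control of the remainder, not $\Theta_0$-weighted control), and it is not needed. The paper instead uses the much simpler uniform constant bound that Theorem~\ref{the:mainb}(2) delivers directly: for $g\equiv 1$ (which trivially has at most exponential linear growth) one has $\|e^{\lambda_0 t}P_t1 - \Pi(1)\|_\infty \le D\,e^{-(\lambda_1-\lambda_0)t}$, hence
\[
e^{\lambda_0(t-s)}P_{t-s}1(y)\ \le\ \|\Pi(1)\|_\infty + D\ =: C
\]
for all $y\in\RR$ once $t-s \ge 3(t_0+t_1)$. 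Since $\phi$ is bounded and $e^{\int_0^s V(X_u)\,du}\le e^{A(V)s}$ by $H2$, the integrand in the outer expectation is dominated by the finite constant $e^{A(V)s}\|\phi\|_\infty\, C$, and DCT applies immediately against the probability measure $\PP_x$. No Gaussian moment estimates for $X_s$ and no integrability of $\Theta_0(X_s)$ are required.
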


We denote by $(Q_t)$ here the semigroup associated with the $Q$-process, that is, 
$$
Q_t(\phi)(x)=\EE_x\left(e^{\int_0^t V(X_u) du} \,\frac{e^{\lambda_0 t} 
\Theta_0(X_t)}{\Theta_0(x)} \,  \phi(X_t)\right), 
$$
In subsection \ref{sec:Q-process}, we prove Theorem \ref{the:Q-process} and 
provide the generator of this semigroup, which is given by
$$
\mathscr{L}^Q(u)=\frac12u''+ \frac{\Psi'_0}{\Psi_0} u',
$$
which corresponds to the stochastic differential equation
$$
dY_t=d B_t+ \frac{\Psi'_0(Y_t)}{\Psi_0(Y_t)}\, dt,
$$
where $\Psi_0=\Theta_0 e^{-\ell}$. This process has a unique invariant measure given by $\Psi^2_0(x)dx$.
\bigskip

For every $T>0$, the spinal process $Y^T=(Y^T_t)_{t\le T}$, associated to the branching diffusion process, is the non-homogeneous Markov process
defined as, for any $\Phi:C([0,T],\RR)\to \RR$ a bounded continuous function
$$
\EE_x(\langle L_T,\Phi \rangle)=m_T(x) \EE_x(\Phi(Y^T)),
$$
where $L_T$ is the historical process at time $T$ defined in \eqref{historical} and $m_T(x)=\EE_x(\langle L_T,1 \rangle)=\EE_x(\langle Z_T,1 \rangle)$ is the total mass at time $T$. This spinal process represents the typical trajectory explaining the history of the trait of an individual picked up at random in the population at time $T$.  This spinal process has been the object of numerous studies (cf.  for example \cite{hardyharris3}) but our motivation comes from  \cite{calvez22} (see also \cite{henry}) where the aim is to better understand the ancestors in the past explaining the distribution of  traits in the population at time $T$. Then we are interested in capturing the dynamics of the time reversal process $\stackrel{\!\!\!\longleftarrow}{Y^T}$ for the spinal process given by
$$
\stackrel{\!\!\!\longleftarrow}{Y^T_s}=Y^T_{T-s}: \, 0\le s\le T.
$$
\begin{theorem}
\label{the:spine} 
For every $T>0$, the time reversed spinal process $\stackrel{\!\!\!\longleftarrow}{Y^T}$, with initial distribution $\nu$, that is, 
$\stackrel{\!\!\!\longleftarrow}{Y^T_0}\stackrel{\mathcal{L}}{\,=\,}\nu$, is an homogeneous Markov process with
transition probability operators: for $0\le t\le T$ and $\phi$ continuous and bounded 
$$
\EE_{\nu}\left(\phi\left(\stackrel{\!\!\!\longleftarrow}{Y^T_t}\right)\right)=\langle \nu, Q_t(\phi)\rangle,
$$
where the semigroup $(Q_t)$ is given in Theorem \ref{the:Q-process} and its initial distribution is $\nu$.
\end{theorem}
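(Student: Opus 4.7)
The plan is to apply Bayes' formula for time reversal of a non-homogeneous Markov process: the forward spine $Y^T$ is non-homogeneous because its bias $e^{\int_0^T V}$ looks ahead to the terminal time $T$, whereas the $Q$-process is homogeneous; once the right initial law is chosen, the two turn out to be time-reverses of each other. First I would identify the forward transition kernel of $Y^T$. From the many-to-one formula \eqref{FK2} and the Markov property of $X$ applied at an intermediate time, for $0\le s\le t\le T$ the kernel reads
$$
P^{Y^T}_{s,t}(x, dy) \;=\; \frac{p_{t-s}(x,y)\, m_{T-t}(y)}{m_{T-s}(x)}\,\rho(dy),
$$
where $p_r(x,y)$ denotes the kernel of $P_r$ with respect to $\rho$, shown to exist and to be symmetric in $(x,y)$ by the spectral analysis of Section~3, and $m_r(x)=P_r(1)(x)$. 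The identity $P_{t-s}(m_{T-t})=m_{T-s}$ confirms this is a genuine Markov kernel.

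Next I would compute the marginal law $\pi_t$ of $Y^T_t$ when the spine is started from the many-to-one size-biased version of $\nu$,
$$
\EE_\nu[\Phi(Y^T)] \;=\; \frac{\int \EE_x\bigl[e^{\int_0^T V(X_u)du}\Phi(X^T)\bigr]\,\nu(dx)}{\int m_T\,d\nu} \;=\; e^{\lambda_0 T}\int \EE_x\bigl[e^{\int_0^T V(X_u)du}\Phi(X^T)\bigr]\,\nu(dx),
$$
the denominator being evaluated via the q.s.d.\ identity (Theorem~\ref{the:qsd}). Combining the Markov property of $X$ with the symmetry of $p_t$ with respect to $\rho$ and the eigenfunction relation $P_t\Theta_0=e^{-\lambda_0 t}\Theta_0$, one obtains
$$
\pi_t(dy) \;=\; \frac{e^{\lambda_0(T-t)}}{C}\,m_{T-t}(y)\,\Theta_0(y)\,\rho(dy),\qquad C:=\int \Theta_0\,d\rho.
$$
In particular $\pi_T(dy)=\Theta_0(y)\rho(dy)/C=\nu(dy)$, which confirms the initial condition $\stackrel{\!\!\!\longleftarrow}{Y^T_0}\sim \nu$.

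I would then apply Bayes' formula, $\widehat{P}_{s,t}(y,dz)=\pi_{T-t}(dz)\,P^{Y^T}_{T-t,T-s}(z,dy)/\pi_{T-s}(dy)$. Plugging in the formulas above, the factor $m_t(z)$ inside $\pi_{T-t}$ cancels against the denominator of $P^{Y^T}_{T-t,T-s}$, the factor $m_s(y)$ cancels against $\pi_{T-s}$, the constants $C$ drop out, and using $p_{t-s}(z,y)=p_{t-s}(y,z)$ one is left with
$$
\widehat{P}_{s,t}(y, dz) \;=\; \frac{e^{\lambda_0(t-s)}\,\Theta_0(z)}{\Theta_0(y)}\,p_{t-s}(y,z)\,\rho(dz),
$$
which is exactly the kernel of $Q_{t-s}$ from Theorem~\ref{the:Q-process}. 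Since $\widehat{P}_{s,t}$ depends only on $t-s$, the reversed process is homogeneous with semigroup $(Q_t)$ and initial distribution $\nu$; integrating against $\phi$ recovers $\EE_\nu[\phi(\stackrel{\!\!\!\longleftarrow}{Y^T_t})]=\langle \nu, Q_t\phi\rangle$.

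The hard part is structural rather than computational: one needs the existence of the symmetric density $p_t(x,y)$ with respect to $\rho$, the eigenfunction identity $P_t\Theta_0=e^{-\lambda_0 t}\Theta_0$, and the q.s.d.\ identity for $\nu$ in a form strong enough to evaluate $\int m_T\,d\nu$ and to justify writing iterated products $\phi_1 P_{\Delta_1}(\phi_2 P_{\Delta_2}(\cdots))$ in kernel form. All three ingredients are furnished by the spectral theory of Section~3 and by Theorem~\ref{the:qsd}. Once these are in place, the entire proof reduces to a clean application of Bayes' rule, and the appearance of the $h$-transform by $\Theta_0$---which is precisely what defines the $Q$-process---is forced by the algebraic cancellations.
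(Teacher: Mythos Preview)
Your proposal is correct and follows essentially the same route as the paper: compute the forward transition kernel of the spine via the many-to-one formula and the Markov property of $X$, determine the marginals using the eigenfunction/q.s.d.\ relation $P_t\Theta_0=e^{-\lambda_0 t}\Theta_0$, then apply Bayes' formula and the $\rho$-symmetry of the kernel to identify the reversed transition with $q(t-s,\cdot,\cdot)$. The only cosmetic difference is that you work throughout with the $\rho$-density (the paper's $r$), whereas the paper uses the Lebesgue density $p$ and invokes the relation $e^{2\ell(z)-2\ell(y)}p(u,y,z)=p(u,z,y)$ at the final step.
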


\subsection{Main tool, spectral gap of $(P_t)$}
\label{subsec:main_tool}
This subsection provides the main tool we need in this article, which is a spectral gap for the semigroup $(P_t)$. In particular this will prove that the semigroup is ultracontractive  
in the sense of \cite{Davies_Simon}.
\medskip

We denote by $\Pi$  the projection onto the space generated
by $\Theta_0$, namely
$$
\Pi(g)(x)=\Theta_0(x) \int \Theta_0(y) g(y) e^{-2\ell(y)} dy=
\Theta_0(x) \frac{\int \Theta_0(y) g(y) e^{-2\ell(y)} dy}{\int \Theta_0^2(y) e^{-2\ell(y)} dy},
$$
where we will use the normalization $\int \Theta_0^2(y) e^{-2\ell(y)} dy=1$.
This projection is well defined for all functions $g$ 
for which $\Theta_0 g$ is $\rho$-integrable. 
As we will see, this includes the bounded functions, all $L^p(dx)$ and $L^p(\rho(dx))$, 
and functions with at most linear exponential
growth at $\pm \infty$. We will also see that $\Theta_0$ is a bounded $C^2$-function,  so $\Pi(g)$ fulfills the same properties
(see Section \ref{sec:estimates} for these and other properties). Note also that $P_t\Pi = e^{-\lambda_0 t}\Pi$.

In the next result we provide bounds
in $L^q(dx)$ and in $L^q(\rho(dx))$. Then, in Theorem \ref{the:mainb}, we denote by $\|\,\,\|_q$ 
the norm in either $L^q(dx)$ or $L^q(\rho(dx))$ for $q\in[1,\infty]$, and the constant $K$ 
below depends on the current measure, either $dx$ or $\rho(dx)$. We  denote by $\II$ the identity function.

\begin{theorem}  
\label{the:mainb}
Assume $H0$-$H4$ holds
\begin{enumerate}[(1)]
\item
Consider $r,q\in [1,\infty]$.
There exists a finite constant $K$ independent of $t,r,q$, 
so that, if $t_0=4(\beta+1)/E$ and $t\ge 3t_0$,  it holds
$$
\|P_t(\II-\Pi)\|_{q,r}=\sup\left\{\|P_t(\II-\Pi)(g)\|_r: \, \|g\|_q=1\right\}\le K e^{-\lambda_1 t}.
$$
In particular, for all $g\in L^q$ it holds
$$
\|e^{\lambda_0 t}P_t(g)-\Pi(g)\|_\infty\le K \|g\|_q e^{-(\lambda_1-\lambda_0)t}
$$
\item If $g$ has at most an exponential linear growth at $\pm \infty$, that is, for all $x$ it holds
$|g(x)|\le Ae^{\kappa |x|}$, for some positive constants
$A$ and $\kappa$, then there exists a constant $D=D(\kappa)$, so
that, if $t_1=4(\kappa+\beta+1)/E, 
t_0=4(\beta+1)/E$ and $t\ge 3(t_0+t_1)$, it holds
$$
\|e^{\lambda_0 t}P_t(g)-\Pi(g)\|_\infty\le A D e^{-(\lambda_1-\lambda_0)t}.
$$
\end{enumerate}
\end{theorem}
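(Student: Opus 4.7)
\medskip
\noindent\textbf{Proof plan for Theorem~\ref{the:mainb}.}
The plan is to exploit the self-adjointness of $(P_t)$ on $L^2(\rho)$ to get a clean $L^2(\rho)\to L^2(\rho)$ spectral gap, and then promote it to $L^q\to L^r$ bounds by a short-time ultracontractivity estimate of the Feynman--Kac semigroup. First I would record that $\mathscr{L}=\tfrac12 u''-au'+Vu$ is symmetric in $L^2(\rho)$, so $(P_t)$ is self-adjoint there. Conjugating by the multiplication operator $U f=e^{\ell}f$ turns $(P_t)$ into the Schr\"odinger semigroup on $L^2(dx)$ associated with $\tfrac12\Delta+\widetilde V$, where $\widetilde V=V+\tfrac12(a'-a^2)$. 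By $H0.3$, $H1$, $H3$ and $H4$, $\widetilde V$ is continuous, bounded above, and tends to $-\infty$ linearly at $\pm\infty$. Standard theory of one-dimensional Schr\"odinger operators with confining potential then yields compactness (indeed Hilbert--Schmidt) of $\widetilde P_t$ on $L^2(dx)$, hence of $P_t$ on $L^2(\rho)$, and gives a discrete real spectrum $e^{-\lambda_0 t}>e^{-\lambda_1 t}\geq e^{-\lambda_2 t}\geq\cdots$ with a positive $C^2$ ground state $\Theta_0$ (so that $\Psi_0=\Theta_0 e^{-\ell}$ is the ground state of the Schr\"odinger operator). These spectral properties will be developed in Section~3 and I will use them here as a black box.

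Once the spectral decomposition is in hand, the $L^2(\rho)\to L^2(\rho)$ bound is immediate: writing $P_t=\sum_k e^{-\lambda_k t}\Pi_k$ with $\Pi_0=\Pi$, orthogonality of the $\Pi_k$ in $L^2(\rho)$ gives
\[
\|P_t(\II-\Pi)\|_{L^2(\rho)\to L^2(\rho)}= e^{-\lambda_1 t}\qquad \text{for every } t>0.
\]
The real content of the theorem is the upgrade to $L^q\to L^r$ uniformly in $q,r\in[1,\infty]$.

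The core step is to show that for the specific value $t_0=4(\beta+1)/E$, the semigroup $P_{t_0}$ is bounded from $L^q$ (with respect to either $dx$ or $\rho$) into $L^2(\rho)$, and also from $L^2(\rho)$ into $L^r$, with a constant that is \emph{uniform} in $q,r\in[1,\infty]$. Via the Feynman--Kac representation
\[
|P_{t_0}f(x)|\le \EE_x\!\left(e^{\int_0^{t_0}V(X_s)ds}|f(X_{t_0})|\right),
\]
I would apply H\"older/Cauchy--Schwarz to separate the weight from $f$, then estimate $\EE_x\!\bigl(e^{p\int_0^{t_0}V(X_s)ds}\bigr)$ by using $H4$ (linear decay $V\le -E|x|$ outside a bounded set) together with the Gaussian tails of the underlying drifted Brownian motion. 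The linear growth of $|\ell|$ from $H0.2$ enters precisely when one wishes to absorb the weight $e^{-2\ell}$ appearing in $\rho$; the choice $t_0=4(\beta+1)/E$ is tuned so that the super-exponential decay in $x$ of $\EE_x[e^{p\int V}]$ beats both the Gaussian spreading of $X_{t_0}$ and the factor $e^{2\beta|x|}$. This is the main obstacle: carefully quantifying these Gaussian/Feynman--Kac bounds so that the resulting constant is uniform in the exponent $p$ (and therefore in $q$ and $r$).

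With that short-time estimate established, part~(1) follows by decomposing, for $t\ge 3t_0$,
\[
P_t(\II-\Pi)=P_{t_0}\circ\bigl[P_{t-2t_0}(\II-\Pi)\bigr]\circ P_{t_0},
\]
which is legitimate because $P_s$ commutes with $\Pi$. The outer factors are bounded $L^q\to L^2(\rho)$ and $L^2(\rho)\to L^r$ by the ultracontractivity step, while the middle factor has $L^2(\rho)\to L^2(\rho)$ norm at most $e^{-\lambda_1(t-2t_0)}$, yielding the bound $K e^{-\lambda_1 t}$ with $K$ independent of $q,r,t$. For part~(2), a function $g$ with $|g(x)|\le A e^{\kappa|x|}$ is not in any $L^q$ a priori, so I would first smooth it by $P_{t_1}$ with $t_1=4(\kappa+\beta+1)/E$: the same Feynman--Kac/Gaussian estimate, now having to absorb the extra factor $e^{\kappa|x|}$, shows that $P_{t_1}g\in L^\infty$ with $\|P_{t_1}g\|_\infty\le A\,D(\kappa)$. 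Applying part~(1) to $P_{t_1}g$ at time $t-t_1\ge 3t_0$ and using $P_t\Pi=e^{-\lambda_0 t}\Pi$ to identify $\Pi(P_{t_1}g)=e^{-\lambda_0 t_1}\Pi(g)$ yields the announced estimate $\|e^{\lambda_0 t}P_t(g)-\Pi(g)\|_\infty\le A D e^{-(\lambda_1-\lambda_0)t}$.
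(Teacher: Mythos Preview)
Your plan is correct in outline but takes a different route from the paper. The paper does not use the abstract factorization $P_{t_0}\circ P_{t-2t_0}(\II-\Pi)\circ P_{t_0}$ at all; instead it works term-by-term with the eigenfunction expansion. Concretely, it writes
\[
P_t(\II-\Pi)(g)(x)=e^{\ell(x)}\sum_{k\ge 1} e^{-\lambda_k t}\,\Psi_k(x)\int \Psi_k(y)\,g(y)\,e^{-\ell(y)}\,dy,
\]
and then invokes the pointwise weighted bound established in Section~3 (Proposition~\ref{pro:1}\,(ii), formula~\eqref{eq:boundPsik2}): for $R=\beta+1$ and $t_0=2R/\widetilde E=4(\beta+1)/E$,
\[
|\Psi_k(x)|\,e^{\pm\ell(x)}\le C\,e^{\lambda_k t_0}\,e^{-|x|}.
\]
This single estimate controls both the coefficient $\int\Psi_k e^{-\ell}g$ (via H\"older, uniformly in $q$) and the $L^r$-norm of $\Psi_k e^{\ell}$ (uniformly in $r$), each at the cost of a factor $e^{\lambda_k t_0}$; summing the series $\sum_{k\ge1}e^{-\lambda_k(t-2t_0)}$ then gives the result directly. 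Part~(2) is done the same way, taking $R=\kappa+\beta+1$ for the integral against $g$.

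What each approach buys: the paper's argument is shorter and yields explicit constants from the eigenfunction bound~\eqref{eq:boundPsik2}, which is itself proved by exactly the kind of Feynman--Kac/Gaussian splitting you describe (applied to the eigenfunction equation rather than to the semigroup). Your factorization approach is the standard ultracontractivity paradigm and is conceptually cleaner, but note a small gap in your sketch: the H\"older step ``separate the weight from $f$'' does not survive at the endpoint $q=1$ (the conjugate exponent $p'=\infty$ gives only the trivial bound $e^{A(V)t_0}$ with no decay in $x$). For $q=1$ you need a kernel estimate, e.g.\ $\sup_y e^{-2\ell(y)}\tilde p(2t_0,y,y)<\infty$, which is obtainable by a Brownian-bridge version of the same splitting argument, or simply by quoting~\eqref{eq:boundPsik2} again. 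Once that is patched, your route goes through; the paper's route avoids the issue by never separating $g$ from the exponential weight in the first place.
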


\medskip The proof of Theorem \ref{thm-BP}
 follows immediately from Theorem \ref{the:mainb} $(2)$.

\section{Basic estimates on the spectrum of $(P_t)$}
\label{sec:estimates}

As we mentioned previously, the Feynman-Kac semigroup $(P_{t})_t$ is not symmetric with respect to the Lebesgue measure
and to make it symmetric we consider the measure $\rho(dy)=e^{-2\ell(y)}dy$. 
We notice that under the hypotheses we have made on $V$, for any $t\ge 0$,  the operator $P_t$ is well 
defined for all $f$ bounded measurable, it is positive preserving, and bounded in $L^\infty$.
In Corollary \ref{cor:density}, we will prove that
the semigroup $(P_{t})_t$ has a discrete spectrum on $L^2(d\rho)$: $-\lambda_0>-\lambda_1>...>\lambda_k>...$ 
which converges to $-\infty$, and the associated set of eigenvectors $(\Theta_k)_{k\ge 0}$ is a complete set 
on $L^2(d\rho)$. In this section we will  provide some basic estimates on the eigenvalues $\lambda_k$ and eigenvectors $\Theta_k$. In particular 
we will prove that $\Theta_0$ can be chosen positive. 

\medskip 
We start the analysis of the semigroup with a Girsanov's transformation to obtain that
$$
P_t(\phi)(x)=\EE_x\left(e^{\int_0^t V(B_s) \, ds} e^{-\int_0^t a(B_s)dB_s-\frac12 \int_0^t a^2(B_s) ds}\phi(B_t)\right),
$$
where $(B_t)$ is a Brownian motion,
and from It\^o's formula, we get
$$
\ell(B_t)=\ell(x)+\int_0^t a(B_s)dB_s+\frac12 \int_0^t a'(B_s) ds,
$$
which gives that
\begin{equation}
\label{Girsanov}
\begin{array}{ll}
P_t(\phi)(x)\hspace{-0.2cm}&=\EE_x\left(e^{\int_0^t V(B_s) \, ds} e^{\ell(x)-\ell(B_t)+\frac12 \int_0^t (a'(B_s)-a^2(B_s) ) ds}\phi(B_t)\right)\\
\hspace{-0.2cm}&=e^{\ell(x)}\EE_x\left(e^{\int_0^t \widetilde V(B_s) \, ds} e^{-\ell(B_t)}\phi(B_t)\right)
=e^{\ell(x)} \widetilde P_t(e^{-\ell}\phi)(x).
\end{array}
\end{equation}
Computations show that $(\widetilde P_t)$ is also a semigroup, corresponding to the Brownian motion with a potential 
$\widetilde V=V(x)+\frac12(a'(x)-a^2(x))$, given by
$$
\widetilde P_t(g)(x)=\EE_x\left(e^{\int_0^t \widetilde V(B_s) \, ds} g(B_t)\right).
$$
 It is clear that $\widetilde V$ also
satisfies Assumptios $H1$-$H4$. We denote by $\tilde A, \widetilde{E}, \tilde x_0$ the corresponding quantities in $H2$-$H4$. 
If $a'-a^2\le M$

Notice that, we can take $\widetilde{E}=E/2$, by choosing a large $\tilde x_0$.

The semigroup $\widetilde P_t$ acts naturally on $L^2(dx)$ and if $u$ 
is a $C^2$ function, satisfying mild conditions at $\pm \infty$, then its generator is given by 
$\widetilde{\mathscr{L}}(u)=\frac12 u''+\widetilde V(x) u$. 

It is well known (see Chapter 2 Theorem 3.1 and Corollary 1 in \cite{Berezin_Shubin}), that under $H0$-$H4$ the operator 
$\widetilde{\mathscr{L}}=\frac12 \partial^2_x +\widetilde V$ has a discrete spectrum
in $L^2(dx)$ of simple eigenvalues $-\lambda_0>-\lambda_1>...>-\lambda_k>...$, which converge to $-\infty$,
and the corresponding orthonormal complete set of eigenfunctions $(\Psi_k)_{k\ge 0}$, are $C^2$ 
functions satisfying the extra decaying property: for all $k\ge 0$ and all $R>0$ there exists $C=C(R,k)$ such
that, for all $|x|\ge C$
\begin{equation}
\label{eq:1}
|\Psi_k(x)|\le e^{-R |x|}.
\end{equation} 
In particular these eigenfunctions also belong to $L^1(dx)$.
We will also see in Corollary \ref{cor:Yaglom density} that $\Psi_0$ can be chosen to be positive.

\begin{remark} We notice that $\lambda_0=\lambda_0(\widetilde V)$ could be positive $0$ or negative.  
It is enough to consider $\widehat V=\widetilde V+\lambda_0$ to have a potential 
that satisfies $H0$-$H4$ and for which $\lambda_0(\widehat V)=0$. When $\widetilde V$ is negative, then
it holds that $\lambda_0>0$. In general, we have 
$\lambda_0> -\sup\limits_x \tilde V(x)$. Indeed,  from \eqref{eq:boundPsik} below, we have
$$
\lambda_0\ge -\sup\limits_x \tilde V(x)+\frac{\pi}{e}\sup_x |\Psi_0(x)|^4.
$$ 
\end{remark}

Recall that $\mathscr{L}(u)=\mathscr{G}(u)+V(u)=\frac12 u''-au'+Vu$ is the generator of $P=(P_t)$.
We first give a rough estimate on the growth rate of the sequence $(\lambda_k)_k$.

\begin{proposition} Assume that $H0$-$H4$ holds. Then,  for all 
$k$ so that $\lambda_k>S_0=:\max\{|\widetilde V(x)|: |x|\le \tilde x_0\}$ it holds
\begin{equation}
\label{eq:growth_lambda}
\lambda_k\ge C\left(\tilde x_0,\widetilde{E}\right) k^{1/3},
\end{equation}
where $C\left(\tilde x_0,\widetilde{E}\right)=\left(\frac{\widetilde{E}^2}{8\tilde x_0+2}\right)^{1/3}$.
\end{proposition}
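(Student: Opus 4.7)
The plan is to apply the Courant--Fischer min-max principle to the self-adjoint operator $\widetilde{\mathscr L}=\tfrac12\partial_x^2+\widetilde V$ on $L^2(dx)$, by comparing the $(k+1)$-dimensional subspace $V=\mathrm{span}(\Psi_0,\dots,\Psi_k)$ to the Neumann Laplacian $-\partial_x^2$ on a suitable interval $[-R,R]$, whose $(n+1)$-th eigenvalue is the explicit quantity $(n\pi/(2R))^2$.

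The first step is a uniform a priori bound on each $\Psi_j$, $j\le k$. Multiplying the eigenvalue equation $\widetilde{\mathscr L}\Psi_j=-\lambda_j\Psi_j$ by $\Psi_j$ and integrating by parts yields
$$\lambda_j=\tfrac12\int|\Psi_j'|^2\,dx+\int(-\widetilde V)\,\Psi_j^2\,dx.$$
Using $|\widetilde V|\le S_0$ on $[-\tilde x_0,\tilde x_0]$ together with $-\widetilde V(x)\ge\widetilde E|x|$ on $|x|\ge\tilde x_0$, one extracts two estimates,
$$\int|\Psi_j'|^2\,dx\le 2(\lambda_j+S_0),\qquad\widetilde E\!\int_{|x|\ge\tilde x_0}\!|x|\,\Psi_j^2\,dx\le\lambda_j+S_0,$$
and Markov's inequality gives the tail bound $\int_{|x|>R}\Psi_j^2\,dx\le(\lambda_k+S_0)/(R\widetilde E)$ for every $R\ge\tilde x_0$ and $j\le k$.

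I would then choose $R$ of size comparable to $(\lambda_k+S_0)/\widetilde E$ (plus a correction depending on $\tilde x_0$) so that the total tail mass $\sum_{j\le k}\int_{|x|>R}\Psi_j^2$ is a controlled fraction of $k+1$. A Gram-matrix perturbation argument then shows that the restriction map $\phi\mapsto\phi|_{[-R,R]}$ from $V$ into $L^2([-R,R])$ is injective, so its image is a $(k+1)$-dimensional subspace of $L^2([-R,R])$. The Courant min-max for the Neumann Laplacian on $[-R,R]$ then produces
$$\Bigl(\tfrac{k\pi}{2R}\Bigr)^{\!2}\le\sup_{\phi\in V\setminus\{0\}}\frac{\int_{-R}^{R}|\phi'|^2\,dx}{\int_{-R}^{R}\phi^2\,dx},$$
and the right-hand side is dominated, via a trace-type bound on the Gram matrix $\bigl(\int\Psi_i'\Psi_j'\bigr)_{i,j\le k}$, by a constant times $(k+1)(\lambda_k+S_0)$.

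Combining these gives $k\lesssim R^2(\lambda_k+S_0)$, and substituting $R\asymp(\lambda_k+S_0)/\widetilde E$ yields $k\lesssim\lambda_k^3/\widetilde E^2$, which rearranges to the claimed lower bound $\lambda_k\ge C(\tilde x_0,\widetilde E)\,k^{1/3}$. The main technical point is to choose the threshold $R$ and track the constants through the Markov truncation and the Gram-matrix trace bound carefully enough to recover the explicit denominator $8\tilde x_0+2$; each step is elementary, but the explicit constant is sensitive to this bookkeeping.
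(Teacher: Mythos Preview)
Your variational route is genuinely different from what the paper does. The paper does not touch the min--max principle at all: it simply quotes a counting-function inequality from Rozenblum--Shubin--Solomyak, namely
\[
N_W(\tau)\;=\;\#\{j:\tau_j\le\tau\}\;\le\;\int|x|\,(\tau-W(x))_+\,dx+1
\]
for $-\partial_x^2+W$ with $W=-2\widetilde V$ and $\tau_k=2\lambda_k$, and then estimates the integral on the right by splitting it over $|x|\le\tilde x_0$ and $\tilde x_0\le|x|\le|x_k|$ (where $|x_k|\le\lambda_k/\widetilde E$ is the outermost solution of $\widetilde V(x)=-\lambda_k$). This yields $k\le\frac{8\tilde x_0+2}{\widetilde E^2}\lambda_k^3$ in a couple of lines, and the explicit constant in the statement is exactly the one produced by that integral estimate; your approach will not reproduce that particular denominator.

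Your outline is close to a valid alternative, but there is a real gap at the injectivity step. You propose to control the Gram perturbation $E_{ij}=\int_{|x|>R}\Psi_i\Psi_j$ using Markov on each $\Psi_j$ separately; however $\|E\|_{\mathrm{op}}\le\mathrm{tr}\,E\le(k{+}1)(\lambda_k+S_0)/(R\widetilde E)$, so forcing $\|E\|_{\mathrm{op}}<1$ needs $R\gtrsim(k{+}1)(\lambda_k+S_0)/\widetilde E$, which destroys the final inequality. The fix is that your energy identity and Markov tail bound actually hold for \emph{every} unit vector $\phi\in V$, not just the basis: since $\langle-\widetilde{\mathscr L}\phi,\phi\rangle\le\lambda_k$, one gets $\tfrac12\int|\phi'|^2\le\lambda_k+S_0$ and $\int_{|x|>R}\phi^2\le(\lambda_k+S_0)/(R\widetilde E)$ uniformly. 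Taking $R=2(\lambda_k+S_0)/\widetilde E$ then gives $\int_{-R}^R\phi^2\ge\tfrac12$ for all unit $\phi\in V$, so the restriction is injective with a uniform lower bound. With this correction the trace bound on $(\int\Psi_i'\Psi_j')$ is also unnecessary: the Rayleigh quotient on the restricted subspace is at most $4(\lambda_k+S_0)$, and the Neumann comparison gives $(k\pi/2R)^2\le 4(\lambda_k+S_0)$, hence $k\lesssim(\lambda_k+S_0)^{3/2}/\widetilde E$. In other words, done carefully your method yields $\lambda_k\gtrsim k^{2/3}$, which is sharper than the $k^{1/3}$ asserted (and is the Weyl-correct exponent for a linearly growing potential), at the cost of a different explicit constant.
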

\begin{proof}
A basic estimate for the distribution of eigenvalues for the Schr\"odinger operator 
$\widetilde{\mathscr{L}}(u)=\frac12 u''+\widetilde V(x)u$ is obtained as follows (see Theorem 5.2 in \cite{Rozenblum_Shubin_Solomyak}).
Using the formula developed for $\Gamma(u)=-u''+Wu$, with $W=-2\widetilde V$ and whose eigenvalues are $\tau_k=2\lambda_k$, 
we get from \cite{Rozenblum_Shubin_Solomyak} that
 $$
N_W(\tau):=\#\{j: \tau_j\le \tau\}\le \int |x|(\tau-W(x))_+ \,dx+1.
$$
So, in the rest of the proof we will give an upper estimate of the integral in the right hand side.
For all $|x|\ge \tilde x_0$ it holds $\widetilde V(x)\le -\widetilde{E}  |x|$, and we choose $k$ so that $\lambda_k>S_0$.
Take now, $x_k\in \RR$ the largest, in absolute value, of the solutions to the equation
$$
2\widetilde V(x)=-\tau_k=-2\lambda_k.
$$
Clearly it holds $|x_k|>\tilde x_0$ and then $-\lambda_k=\widetilde V(x_k)\le -\widetilde{E} |x_k|$. 
We deduce that $|x_k|\le \frac1{\widetilde{E}}\lambda_k$. Also, for $|x|>|x_k|$, we have $2\widetilde V(x)<-2\lambda_k$. Then, for $\tau=2\lambda_k$, we have
$$
\begin{array}{ll}
k+1\hspace{-0.2cm}&=N_W(2\lambda_k)\le \int\limits_{-|x_k|}^{|x_k|} |x|(2\lambda_k+2\widetilde V(x))_+ \ dx +1
\le \frac2{\widetilde{E}}\lambda_k \int\limits_{-|x_k|}^{|x_k|} (\lambda_k+\widetilde V(x))_+ \ dx +1\\
\\
\hspace{-0.2cm}&= \frac2{\widetilde{E}}\lambda_k \int\limits_{|x|\le \tilde x_0} (\lambda_k+\widetilde V(x))_+ \ dx +
 \frac2{\widetilde{E}}\lambda_k \int\limits_{\tilde x_0 \le |x|\le |x_k|} (\lambda_k+\widetilde V(x))_+ \ dx +1\\
 \\
\hspace{-0.2cm}& \le  \frac{4 \tilde x_0}{\widetilde{E}} \lambda_k(\lambda_k+S_0)+
\frac2{\widetilde{E}}\lambda_k \int\limits_{ \tilde x_0\le |x|\le |x_k|} (\lambda_k+\widetilde V(x))_+ \ dx +1
 \end{array}
$$
To estimate the last integral we use, for $\tilde x_0\le |x|\le |x_k|$
$$
\lambda_k+\widetilde V(x) \le \lambda_k -\widetilde{E} |x|.
$$ 
Since the right hand side is positive, we get $(\lambda_k+\widetilde V(x))_+\le \lambda_k -E |x|$ on the range
$\tilde x_0\le |x|\le |x_k|$. We conclude that
$$
k\le \frac{8 \tilde x_0}{\widetilde{E}} \lambda^2_k+4\lambda_k\int_{0}^{\lambda_k/\widetilde{E}}
\frac1{\widetilde{E}}\lambda_k-x\,\, dx
\le \frac{8 \tilde x_0}{\widetilde{E}} \lambda^2_k+\frac2{\widetilde{E}^2}\lambda_k^3\le \frac{8 \tilde x_0+2}{\widetilde{E}^2}\lambda_k^3,
$$
and the result follows.
\end{proof}

\begin{corollary} 
\label{cor:1}
For all $t>0$ it holds $\sum\limits_{k\ge 0} e^{-\lambda_k t}<\infty$.
\end{corollary}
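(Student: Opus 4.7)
The plan is to deduce the convergence of the series directly from the polynomial lower bound on the eigenvalues that has just been established. The ingredients are: (i) the spectrum of $\widetilde{\mathscr{L}}$ is discrete and $\lambda_k\to +\infty$ as $k\to\infty$ (this was recalled from \cite{Berezin_Shubin}), and (ii) once $\lambda_k>S_0$, we have the bound $\lambda_k\ge C(\tilde x_0,\widetilde{E})\,k^{1/3}$ from \eqref{eq:growth_lambda}.

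The first step is to notice that since $\lambda_k\to\infty$, there exists an integer $k_0$ such that $\lambda_k>S_0$ for every $k\ge k_0$. Split the series as
$$
\sum\limits_{k\ge 0} e^{-\lambda_k t}=\sum\limits_{k=0}^{k_0-1} e^{-\lambda_k t}+\sum\limits_{k\ge k_0} e^{-\lambda_k t}.
$$
The first sum is a finite sum and is therefore finite. For the second sum, using \eqref{eq:growth_lambda}, set $c=c(t)=C(\tilde x_0,\widetilde{E})\,t>0$, and estimate
$$
\sum\limits_{k\ge k_0} e^{-\lambda_k t}\le \sum\limits_{k\ge k_0} e^{-c\,k^{1/3}}\le \int_{0}^{\infty} e^{-c\,u^{1/3}}\,du.
$$
The last step is the substitution $v=c\,u^{1/3}$, which reduces the integral to (a multiple of) $\int_0^\infty v^2 e^{-v}\,dv=2$, hence finite for every $t>0$.

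The argument is essentially mechanical; there is no real obstacle beyond bookkeeping, because the hard work (namely, the eigenvalue counting estimate yielding \eqref{eq:growth_lambda}) has already been carried out in the preceding proposition. The only thing to be careful about is that the bound $\lambda_k\ge C k^{1/3}$ is only claimed once $\lambda_k$ exceeds $S_0$; this is why one must first truncate the sum at a sufficiently large index $k_0$, which is legitimate precisely because $\lambda_k\to\infty$.
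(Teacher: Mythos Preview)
Your proof is correct and follows essentially the same approach as the paper: both use the lower bound $\lambda_k\ge C\,k^{1/3}$ from \eqref{eq:growth_lambda} for large $k$ to dominate the tail of the series. The only cosmetic difference is that the paper concludes via the inequality $e^{-Ct\,k^{1/3}}\le k^{-2}$ for large $k$, whereas you use an integral comparison, but this is the same idea.
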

\begin{proof} For large $k$ we have $e^{-\lambda_k t}\le e^{-Ct\, k^{1/3}}\le k^{-2}$.
\end{proof}

The first result is the existence of densities for both $(P_t)$ and $(\widetilde P_t)$. We also provide
some a priori estimates on these eigenfunctions. 

\begin{proposition} 
\label{pro:1}
The following properties hold under $H0$-$H4$.
\begin{enumerate}[(i)]
\item for every $k$, the function $\Psi_k$ is an eigenfunction of $\widetilde P$, that is, 
for all $x,k,s>0$ it holds
\begin{equation}
\label{eq:eigen tildeP}
\Psi_k(x)=e^{\lambda_k t}\EE_x\left(e^{\int_0^t \widetilde V(B_s) ds} \Psi_k(B_s) ds\right).
\end{equation}
\item For every $k\ge 0$ and all $x$ it holds
\begin{equation}
\label{eq:boundPsik}
|\Psi_k(x)|\le\left(\frac{e}{\pi}\right)^{1/4}\left(\lambda_k+\tilde A\right)^{1/4}.
\end{equation}
On the other hand, for all $R>0$, there exist a constant $C_1=C_1(R)$ (see \ref{eq:C1R} below) and 
$t_0=t_0(R)=2R/\widetilde{E}$ such that, for all $k\ge 0$
and all $x$ it holds
\begin{equation}
\label{eq:boundPsik2}
e^{R|x|} |\Psi_k(x)|\le C_1(R) e^{\lambda_k t_0}.
\end{equation} 
Moreover, for any $t>0$ there exists a constant
$C_2(R,t)$ so that, for all $x\in \RR, k\ge 0, 0<s\le t$ it holds
\begin{equation}
\label{eq:boundPsik3}
e^{R|x|} |\Psi_k(x)|\le C_2(R,t) \frac{e^{\lambda_k s}}{s^{1/4}}.
\end{equation}
\item The semigroup $\widetilde P$ has a transition density $\tilde p(t,x,y)$ given by 
\begin{equation}
\label{eq:density tilde p}
\tilde p(t,x,y)=\sum\limits_{k\ge 0} e^{-\lambda_k t} \Psi_k(x) \Psi_k(y),
\end{equation}
where this series converges absolutely for all $t>0, x,y$, uniformly on compact sets
of $(0,\infty)\times \RR^2$. Moreover, for all $t>0, x,y$, we have the following bound
\begin{equation}
\label{eq:bound1'}
\tilde p(t,x,y)\le \sum\limits_{k\ge 0} e^{-\lambda_k t} |\Psi_k(x) \Psi_k(y)| \le \frac{e^{\tilde A t}}{\sqrt{2\pi t}}.
\end{equation}
\item the density $\tilde p$ is continuous in the three variables on $(0,\infty)\times \RR^2$.

\end{enumerate}
\end{proposition}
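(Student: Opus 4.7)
My plan is to establish the four claims in sequence, each providing the tool for the next.

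For (i), I would apply It\^o's formula to $M_t=e^{\lambda_k t+\int_0^t\widetilde V(B_s)\,ds}\Psi_k(B_t)$ and use the eigenvalue equation $\widetilde{\mathscr{L}}\Psi_k=-\lambda_k\Psi_k$ to cancel the drift, so $M_t$ is a local martingale. The super-exponential decay \eqref{eq:1} of $\Psi_k$ together with $\widetilde V\le \tilde A$ supplies enough integrability to promote it to a genuine martingale, and $\EE_x[M_t]=M_0$ is exactly \eqref{eq:eigen tildeP}.

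For \eqref{eq:boundPsik} in (ii), I start from $\Psi_k(x)=e^{\lambda_k t}\widetilde P_t\Psi_k(x)$, bound the Feynman--Kac weight by $e^{\tilde A t}$, and apply Cauchy--Schwarz against the free Brownian kernel $p^0_t$: using $\|\Psi_k\|_{L^2(dx)}=1$ and $\|p^0_t(x,\cdot)\|_{L^2}=(4\pi t)^{-1/4}$ I get $|\Psi_k(x)|\le e^{(\lambda_k+\tilde A)t}(4\pi t)^{-1/4}$ and optimize at $t=1/[4(\lambda_k+\tilde A)]$. For the weighted bound \eqref{eq:boundPsik2} I fix $t=t_0=2R/\widetilde E$ and split the Feynman--Kac expectation on $A=\{\sup_{u\le t_0}|B_u-x|\le |x|/2\}$: on $A$, hypothesis $H4$ gives $\widetilde V(B_u)\le -\widetilde E|x|/2$ so $\int_0^{t_0}\widetilde V(B_u)\,du\le -R|x|$ (once $|x|\ge 2\tilde x_0$), and the Feynman--Kac weight is at most $e^{-R|x|}$; on $A^c$, Cauchy--Schwarz against $\Psi_k$ (using $\|\Psi_k\|_2=1$ rather than $\|\Psi_k\|_\infty$, so the constant remains independent of $k$) combined with the reflection-principle tail bound $\PP_x(A^c)\le 4e^{-x^2/(8t_0)}$ makes the product $e^{R|x|-x^2/(16t_0)}$ uniformly bounded in $x$, yielding $e^{R|x|}|\Psi_k(x)|\le C_1(R)e^{\lambda_k t_0}$. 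For \eqref{eq:boundPsik3} I insert \eqref{eq:boundPsik2} with a slightly enlarged rate $R'>R$ inside $\Psi_k(x)=e^{\lambda_k s}\int \tilde p(s,x,y)\Psi_k(y)\,dy$, use the elementary comparison $\tilde p(s,x,y)\le e^{\tilde A s}p^0_s(x,y)$, and check by direct Gaussian computation that $e^{R|x|}\int p^0_s(x,y)e^{-R'|y|}\,dy$ is uniformly bounded; the $s^{-1/4}$ factor enters via $L^2\to L^\infty$ ultracontractivity of the free Brownian semigroup to absorb the short-time singularity.

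For (iii)--(iv), the Feynman--Kac bridge representation $\tilde p(t,x,y)=p^0_t(x,y)\,\EE_{x,y}^t[e^{\int_0^t\widetilde V(B_u)\,du}]$ gives simultaneously the existence of the kernel and the Gaussian comparison $\tilde p(t,x,y)\le e^{\tilde A t}p^0_t(x,y)\le e^{\tilde A t}/\sqrt{2\pi t}$, which is the right-hand inequality in \eqref{eq:bound1'}. Combined with the completeness of the orthonormal eigenbasis $(\Psi_k)$ for $\widetilde P_t$ (the cited Berezin--Shubin result), every $f\in L^2(dx)$ admits the expansion $\widetilde P_tf=\sum_k e^{-\lambda_k t}\langle f,\Psi_k\rangle\Psi_k$; the pointwise bound \eqref{eq:boundPsik} together with Corollary \ref{cor:1} yields $\sum_k(\lambda_k+\tilde A)^{1/2}e^{-\lambda_k t}<\infty$, hence absolute and locally uniform convergence of $\sum_k e^{-\lambda_k t}\Psi_k(x)\Psi_k(y)$, which justifies exchanging sum and integral and identifies this series as $\tilde p(t,x,y)$; the left-hand inequality in \eqref{eq:bound1'} is then immediate. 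Continuity (iv) follows from local uniform convergence of a series of continuous summands on $(0,\infty)\times\RR^2$. The main technical obstacle lies in (ii): absorbing the polynomial-in-$\lambda_k$ factor $(\lambda_k+\tilde A)^{1/4}$ from the $L^\infty$ bound into factors of $e^{\lambda_k s}$ that we are free to spend. This is carried out using $\lambda_k+\tilde A\ge 0$ and the growth $\lambda_k\gtrsim k^{1/3}$ from \eqref{eq:growth_lambda}, which makes the absorption uniform in $k$ for any fixed $R$ and $t$.
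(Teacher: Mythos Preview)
Your proposal follows the paper's route almost exactly: It\^o's formula with localisation for (i), Cauchy--Schwarz against the heat kernel plus optimization in $t$ for \eqref{eq:boundPsik}, a good-set/bad-set decomposition with the reflection principle for \eqref{eq:boundPsik2}, and the Gaussian comparison $\widetilde V\le\tilde A$ together with the spectral expansion for (iii)--(iv). The only organisational difference is in (iii): you first invoke the Feynman--Kac bridge to get $\tilde p\le e^{\tilde A t}p^0_t$, then prove absolute convergence of the eigen-series via \eqref{eq:boundPsik} and Corollary~\ref{cor:1}, and finally identify the series with $\tilde p$; the paper reverses the order, proving $\sum_k e^{-\lambda_k t}\Psi_k^2(x)\le e^{\tilde A t}/\sqrt{2\pi t}$ directly by testing $\int\phi\,\widetilde P_t\phi$ against an approximate identity $\phi\to\delta_x$, then obtaining the off-diagonal absolute bound by Cauchy--Schwarz, and only afterwards identifying the density. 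Both rest on the same comparison and are equivalent.

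Your sketch for \eqref{eq:boundPsik3} is the one place that does not close. Feeding \eqref{eq:boundPsik2} back through $\Psi_k=e^{\lambda_k s}\widetilde P_s\Psi_k$ leaves an unavoidable extra factor $e^{\lambda_k t_0(R')}$ on top of $e^{\lambda_k s}$, while the separate appeal to $L^2\!\to\! L^\infty$ ultracontractivity recovers the factor $s^{-1/4}$ but without the weight $e^{R|x|}$; you have two half-arguments that do not combine into the stated bound. The paper does not bootstrap here at all: it simply re-runs the same good/bad decomposition used for \eqref{eq:boundPsik2} with a free time parameter rather than the specific choice $t_0=2R/\widetilde E$, obtaining $e^{R|x|}|\Psi_k(x)|\le\Gamma(R,t)\,e^{\lambda_k t}/t^{1/4}$ directly, and declares $C_2(R,t)=\Gamma(R,t)$.
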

\begin{remark} In Section \ref{sec:1} we shall prove that $\tilde p$ is $C^{\infty,2}$ on $(0,\infty)\times \RR^2$,
improving $(iv)$ in this Proposition.
\end{remark}
\begin{proof} $(i)$ From It\^o's formula and using the stopping time $T_n=\inf\{s: |B_s|>n\}$ we have,
$$
\EE_x\left(e^{\int_0^{t\wedge T_n} \widetilde V(B_s) ds} e^{\lambda_k (t\wedge T_n)} 
\Psi_k(B_{t\wedge T_n})\right)=\Psi_k(x),
$$
where the stochastic integral part vanishes because its integrand is bounded, and the finite variation
part is $0$, because $\Psi_k$ is an eigenfunction of $\widetilde{\mathscr{L}}$. Using the dominated convergence theorem (recall that
$\Psi_k$ is continuous and bounded), we get
\begin{equation}
\label{eq:QPsi}
\EE_x\left(e^{\int_0^t \widetilde V(B_s) ds} \Psi_k(B_t)\right)=e^{-\lambda_k t}\Psi_k(x).
\end{equation}

$(ii)$ This follows from the eigenfunction equation given in $(i)$. We first consider $R=0$. 
Then, we have
$$
\begin{array}{ll}
|\Psi_k(x)|\hspace{-0.2cm}&\le e^{(\lambda_k+\tilde A) t} \EE_x(|\Psi_k(B_t)|)=e^{(\lambda_k+\tilde A) t}
\int |\Psi_k(z)| \frac{1}{\sqrt{2\pi t}} e^{-\frac{1}{2t}(z-x)^2} dz\\
\hspace{-0.2cm}&\le e^{(\lambda_k+\tilde A) t} \left(\int |\Psi_k(z)|^2 dz\right)^{1/2} \left(\frac{1}{2\pi t} 
\int e^{-\frac{1}{t}(z-x)^2} dz\right)^{1/2} = \frac{e^{(\lambda_k+\tilde A) t}}{(4\pi t)^{1/4}}.
\end{array}
$$
Notice that $\lambda_k+\tilde A>0$, otherwise we conclude that $\Psi_k\equiv 0$, by taking $t\to \infty$.
So, optimizing over $t$ we get $t^*=t^*(k)=\frac{1}{4(\lambda_k+\tilde A)}$,
which gives
$$
|\Psi_k(x)|\le\left(\frac{e}{\pi}\right)^{1/4}\left(\lambda_k+\tilde A\right)^{1/4},
$$
proving the desired property when $R=0$.

Now, assume $R>0$. We first assume $x>0$. Consider $t>0$ to be defined latter, 
which satisfies $t\widetilde{E}\ge 2R$. For $z\ge x/2\ge \tilde x_0$ one has
$$
\tilde V(z)\le -\widetilde{E} z\le -\widetilde{E} x/2.
$$
Now, we decompose the expectation in two sets $D,D^c$, where
$$
D=\left\{\min\limits_{0\le u\le t}  W_u \ge -x/2\right\},
$$
with $B_s=W_s+x$, that is, $W$ is a standard BM.
We have
$$
\begin{array}{ll}
|h^D(x)|\hspace{-0.2cm}&=e^{\lambda_k t}e^{Rx}\,\left|\EE_x\left(e^{\int_0^t \widetilde V(B_u) \, du}\,  
\Psi_k(B_t)\ind_D\right)\right|
\le e^{\lambda_k t} \EE_0\left( |\Psi_k(x+W_t)|\right)\\
\hspace{-0.2cm}&=e^{\lambda_k t} \frac{1}{\sqrt{2\pi t}}\int |\Psi_k(z)| e^{-\frac{1}{2t}(z-x)^2} dz
\le e^{\lambda_k t} \frac{1}{\sqrt{2\pi t}} \left(\int \Psi_k^2(z) dz\right)^{1/2} \left(\int e^{-\frac{1}{t}(z-x)^2} dz\right)^{1/2}\\
\hspace{-0.2cm}&=e^{\lambda_k t} \frac{1}{\sqrt{2\pi t}} (\pi t)^{1/4}=\frac{e^{\lambda_k t}}{(4\pi t)^{1/4}}.
\end{array}
$$
Recall that $\lambda_0+\tilde A>0$. Then, we get
$$
|h^D(x)|\le \frac{1}{(4\pi)^{1/4}}\frac{e^{(\lambda_k+\tilde A) t}}{t^{1/4}}.
$$
Now, we consider the expectation over $D^c$
$$
\begin{array}{ll}
|h^{D^c}(x)|\hspace{-0.2cm}&=e^{\lambda_k t}e^{Rx}\,\left|\EE_x\left(e^{\int_0^t \widetilde V(B_u) \, du}\,  
\Psi_k(B_t)\ind_{D^c}\right)\right|\le 
e^{(\lambda_k+\tilde A) t}e^{Rx}\, \EE_x(  |\Psi_k(B_t)|\ind_{D^c})\\
\hspace{-0.2cm}&\le e^{(\lambda_k+\tilde A) t} e^{Rx} (\PP_0(D^c))^{1/2} \left(\frac{1}{\sqrt{2\pi t}} \int |\Psi(z)|^2 dz\right)^{1/2}=\frac{e^{(\lambda_k+\tilde A) t}}{(2\pi t)^{1/4}} e^{Rx} (\PP_0(D^c))^{1/2}.
\end{array}
$$
By the reflection principle (see for example \cite{karatzasshreve})
$$
\begin{array}{ll}
\PP_0(D^c)\hspace{-0.2cm}&=\PP_0\left(\max\limits_{s\le t} W_u \ge x/2\right)=
\frac{2}{\sqrt{2\pi t}}\int\limits_{\frac{x}{2}}^\infty e^{-\frac{1}{2t} z^2} dz
=\frac{2}{\sqrt{2\pi}}\int\limits_{\frac{x}{2\sqrt{t}}}^\infty e^{-\frac{1}{2} u^2} du\\
\hspace{-0.2cm}&\le \frac{2}{\sqrt{2\pi}}\int\limits_{\frac{x}{2\sqrt{t}}}^\infty 
\frac{u}{\frac{x}{2\sqrt{t}}}e^{-\frac{1}{2} u^2} du
=\frac{2}{\sqrt{2\pi}}\frac{1}{\frac{x}{2\sqrt{t}}}e^{-\frac{1}{8t}x^2}.
\end{array}
$$
This implies that, for $x>8Rt+\sqrt{8t}$
$$
e^{2Rx}\PP_0(D^c)\le \frac{2e^{8tR^2}}{\sqrt{2\pi}}\frac{1}{\frac{x}{2\sqrt{t}}}e^{-\frac{1}{8t}(x-8tR)^2}\le
\frac{e^{8tR^2}}{\sqrt{\pi}}\frac{1}{\frac{x-8Rt}{\sqrt{8t}}}e^{-\frac{1}{8t}(x-8tR)^2}
\le \frac{e^{8tR^2}}{\sqrt{\pi}},
$$
which gives
$$
\begin{array}{ll}
|h^{D^c}(x)|\hspace{-0.2cm}&\le \frac{1}{(2\pi^3)^{1/4}}\frac{e^{(\lambda_k+\tilde A+4R^2) t}}{t^{1/4}}.
\end{array}
$$
Therefore, we get the bound, for $x\ge x_1=x_1(R,t)=\max\{ 2\tilde x_0, 8Rt+\sqrt{8t}\}$,
$$
e^{R x} |\Psi_k(x)|\le  \left( \frac{1}{(4\pi)^{1/4}}+\frac{1}{(2\pi^3)^{1/4}}\right) 
\frac{e^{(\lambda_k+\tilde A+4R^2) t}}{t^{1/4}}.
$$
A similar inequality holds for $x<-x_1(R,t)$.
For $|x|\le x_1$, we have, using the first part, that
$$
e^{R|x|}|\Psi_k(x)|\le \left(\frac{e}{\pi}\right)^{1/4}(\lambda_k+\tilde A)^{1/4} e^{R x_1}.
$$
Notice that $z^{1/4}\le e^z$ is valid for all $z\ge 0$, from which we deduce
\begin{equation}
\label{eq:cota2}
e^{R|x|}|\Psi_k(x)|\le  \left[\left( \frac{1}{(4\pi)^{1/4}}+\frac{1}{(2\pi^2)^{1/4}}\right) e^{(\tilde A+4R^2)t}+ 
\left(\frac{e}{\pi}\right)^{1/4}e^{R x_1}\right] \frac{e^{\lambda_k t}}{t^{1/4}}\le \Gamma(R,t)\frac{e^{\lambda_k t}}{t^{1/4}},
\end{equation}
where 
$$
\Gamma(R,t)=\left[\left( \frac{1}{(4\pi)^{1/4}}+\frac{1}{(2\pi^2)^{1/4}}\right) e^{|\tilde A+4R^2|t}+ 
\left(\frac{e}{\pi}\right)^{1/4}e^{R\, x_1(R,t)}\right].
$$
Now, we choose $t=t_0(R)=2R/\widetilde{E}$ to get
$$
e^{R|x|}|\Psi_k(x)|\le C_1(R) e^{\lambda_k t_0},
$$
where
\begin{equation}
\label{eq:C1R}
C_1(R)=\frac{\Gamma(R,t_0)}{t_0^{1/4}}=
\frac{1}{(2R/\widetilde{E})^{1/4}}\left(\frac{1}{(4\pi)^{1/4}}+\frac{1}{(2\pi^2)^{1/4}}\right) e^{|\tilde A+4R^2|\,2R/\widetilde{E}}+ 
\left(\frac{e}{\pi}\right)^{1/4}e^{R\, x_1}
\end{equation}
with $x_1=x_1(R,2R/\widetilde{E})$.

The bound \eqref{eq:boundPsik3} is obtained in the same way with
$$
C_2(R,t)=\Gamma(R,t).
$$
The result is shown (recall that $\widetilde{E}=E/2$).

$(iii)$ Take $\phi$ a continuous non negative function with compact support, in particular $\phi\in L^2(dx)$, therefore
$$
\phi(x)=\sum\limits_{k\ge 0} \Psi_k(x) \int \Psi_k(y) \phi(y) dy,
$$
where the series converges in $L^2(dx)$. 
Using that the exponential part is bounded and the dominated convergence theorem, we conclude that
$$
\int \phi(x) \EE_x\left(e^{\int_0^t \widetilde V(B_s) ds} \phi(B_t)\right)\, dx=
\sum_{k\ge 0} e^{-\lambda_k t} \int \Psi_k(x) \phi(x) dx\,  \int \Psi_k(y) \phi(y) dy. 
$$
In particular we have, for all $n\ge 1$
$$
\begin{array}{l}
\sum\limits_{k=1}^n e^{-\lambda_k t} \left(\int \Psi_k(x) \phi(x) dx\right)^2 
\le \int \phi(x) \EE_x\left(e^{\int_0^t \widetilde V(B_s) ds} \phi(B_t)\right) dx\\
\le e^{\tilde A t} \int_{\RR^2} \phi(x) \phi(y) \frac1{\sqrt{2\pi t}}e^{-\frac{(x-y)^2}{2t}} \, dx dy
\le \frac{e^{\tilde A t}}{\sqrt{2\pi t}} \int_{\RR^2} \phi(x) \phi(y)  \, dx dy.
\end{array}
$$
Using the integrability and continuity of the eigenfunctions, and taking a sequence $\phi$ converging
to $\delta_x$ we obtain the bound, for all $n,x,t>0$
$$
\sum\limits_{k=0}^n e^{-\lambda_k t} \Psi^2_k(x)\le \frac{e^{\tilde A t}}{\sqrt{2\pi t}},
$$
from where follows the point-wise convergence of the series
$$
\sum\limits_{k\ge0} e^{-\lambda_k t} \Psi^2_k(x)\le \frac{e^{\tilde A t}}{\sqrt{2\pi t}}.
$$

Then, Cauchy-Schwarz inequality shows the absolute convergence and domination for
$$
\sum\limits_{k\ge0} e^{-\lambda_k t} |\Psi_k(x)||\Psi(y)|\le \frac{e^{\tilde A t}}{\sqrt{2\pi t}}.
$$ 
This implies that $\tilde p$ is well defined and bounded on $(t_0,\infty)\times \RR^2$, for all $t_0>0$.

Consider  $\phi_2\in L^2(dx)$ a finite
combination of the eigenfunctions: $\phi_2(y)=\sum\limits_{k=0}^m a_k \Psi_k(y)$. Notice that
$\phi_2$ is continuous and bounded. As before, for all $x,t$ it holds for all $n\ge m$ (here we use the continuity of each $\Psi_k$)
$$
\begin{array}{l}
\EE_x\left(e^{\int_0^t \widetilde V(B_s) ds} \phi_2(B_t)\right)=
\sum\limits_{k=0}^m e^{-\lambda_k t} \Psi_k(x) \,  \int \Psi_k(y) \phi_2(y) dy=\\
\\
\int \sum\limits_{k=0}^n e^{-\lambda_k t} \Psi_k(x) \Psi_k(y)\,  \phi_2(y) dy=
\int \tilde p(t,x,y)\phi_2(y) dy,
\end{array}
$$ 
where the last equality follows from dominated convergence theorem. Using the density of such functions $\phi_2$ in $L^2(dx)$, 
we get that $\tilde p(t,x,\bullet)$ is a density for $\widetilde P_t(\bullet)(x)$.

$(iv)$ Assume that $(t_n,x_n,y_n)\to (t,x,y) \in (0,\infty)\times \RR$. Consider $r=\inf\{t_n; n\in \NN\}>0$
and $k_0=\inf\{k; \lambda_k>0\}$,  then for all $k\ge k_0$ we have (see \eqref{eq:boundPsik})
$$
\sup\limits_{n}e^{-\lambda_k t_n}|\Psi_k(x_n)| |\Psi_k(y_n)|\le 
e^{-\lambda_k r} \, \left(\frac{e}{\pi}\right)^{1/2}(\lambda_k+\tilde A)^{1/2}
$$
which according to Lemma \ref{cor:1} is summable. Then we can use the dominated convergence theorem (for series)
to get that
$$
\lim\limits_{n\to \infty} \sum\limits_{k\ge 0} e^{-\lambda_k t_n}\Psi_k(x_n)\Psi_k(y_n)=
\sum\limits_{k\ge 0} e^{-\lambda_k t}\Psi_k(x)\Psi_k(y),
$$
holds uniformly on compact sets of $(0,\infty)\times \RR^2$.
\end{proof}
\medskip

The following result gives a representation of the density for the semigroup $(P_t)$. 
Recall that $|\ell(x)|\le \gamma +\beta |x|$, for all $x$ (see $H0.2$). 

\begin{corollary} 
\label{cor:density}
Assume $H0$-$H4$, then 
\begin{enumerate}[(1)]
\item The set $(\Theta_k=\Psi_k e^{\ell})_k$ is an orthonormal basis of $L^2(\rho(dy))$.
\item The semigroup $(P_t)$ can be extended to $L^2(\rho(dx))$ and $L^2(dx)$. Each 
$\Theta_k=\Psi_k e^{\ell}\in L^2(\rho(dx))\cap L^2(dx)$
is an eigenvector of $P_t$, with eigenvalue $e^{-\lambda_k t}$. This semigroup has 
a symmetric density with respect to $\rho(dx)$ given by, for $t>0$
$$
r(t,x,y)=e^{\ell(x)+\ell(y)}\sum\limits_{k\ge 0} e^{-\lambda_k t} \,\Psi_k(x) \Psi_k(y)=
\sum\limits_{k\ge 0} e^{-\lambda_k t} \, \Theta_k(x) \Theta_k(y),
$$
and similarly a density with respect to $dx$
\begin{equation}
\label{eq:density_Pt}
p(t,x,y)=e^{\ell(x)-\ell(y)}\sum\limits_{k\ge 0} e^{-\lambda_k t} \, \Psi_k(x) \Psi_k(y).
\end{equation}
Also we have the bound
\begin{equation}
\label{eq:bound1}
p(t,x,y)\le e^{\ell(x)-\ell(y)}\sum\limits_{k\ge 0} e^{-\lambda_k t}\, |\Psi_k(x) \Psi_k(y)| \le 
e^{\ell(x)-\ell(y)} \frac{e^{\tilde A t}}{\sqrt{2\pi t}},
\end{equation}
and a similar bound for $r$.

\item Let $R'\ge 0$ and $s_0=2(R'+\beta)/\widetilde{E}$. Then, there exists $F=F(R')<\infty$ so that for $t>3s_0$ it holds
\begin{equation}
\label{eq:bound2}
r(t,x,y) \vee p(t,x,y)\le F e^{-\lambda_0(t-2s_0)} e^{-R'(|x|+|y|)}.
\end{equation}

Moreover, for every compact set $[t_0,t_1]\subset (0,\infty)$, both series $r,p$, converge absolutely and 
they are uniformly bounded on $[t_0,t_1] \times \RR^2$.  
If $\lambda_0\ge 0$, the same conclusion holds on $[t_0,\infty)\times \RR^2$. Also $r,p$ are continuous on 
$(0,\infty)\times \RR^2$.

\item $(P_t)_{t>0}$ can be extended to the class of functions that has at most an exponential linear growth at $\pm \infty$, and for every $\phi$ in this class and $t>0$ we have that
$$
P_t(\phi)(x)=\int r(t,x,y) \phi(y) e^{-2\ell(y)} dy=\int p(t,x,y) \phi(y) dy,
$$
is bounded in $x$, and has an exponential decay at $x=\pm \infty$, uniformly on $t\ge t_0$, for all $t_0>0$.
\end{enumerate}
\end{corollary}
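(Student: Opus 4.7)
The plan is to split the proof according to the four items. For part (1), the map $f\mapsto fe^{\ell}$ is an isometry from $L^2(dx)$ onto $L^2(\rho(dx))$, since $\int(fe^{\ell})^2 e^{-2\ell}\,dx=\int f^2\,dx$. As $(\Psi_k)_k$ is a complete orthonormal system in $L^2(dx)$ (cited earlier from \cite{Berezin_Shubin}), its image $(\Theta_k=\Psi_k e^{\ell})_k$ is a complete orthonormal system in $L^2(\rho(dx))$. For part (2), I would use the Girsanov identity \eqref{Girsanov}, $P_t(\phi)(x)=e^{\ell(x)}\widetilde P_t(e^{-\ell}\phi)(x)$. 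Applied to $\phi=\Theta_k$ it gives $P_t\Theta_k=e^{\ell}\widetilde P_t\Psi_k=e^{-\lambda_k t}\Theta_k$ via the eigenfunction property \eqref{eq:eigen tildeP}. Applied to general $\phi$, it transports the density $\tilde p$ of Proposition \ref{pro:1}(iii) into $p(t,x,y)=e^{\ell(x)-\ell(y)}\tilde p(t,x,y)$ with respect to $dx$ and $r(t,x,y)=e^{\ell(x)+\ell(y)}\tilde p(t,x,y)$ with respect to $\rho(dy)$, yielding the claimed series. The bound \eqref{eq:bound1} then follows directly from \eqref{eq:bound1'}.

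For part (3), the main ingredient is the decay estimate \eqref{eq:boundPsik2}. I would set $R=R'+\beta$ so that $s_0=2R/\widetilde E$ matches $2(R'+\beta)/\widetilde E$ in the statement, and combine
$$|\Psi_k(x)\Psi_k(y)|\le C_1(R)^2 e^{2\lambda_k s_0}e^{-R(|x|+|y|)}$$
with H0.2, which gives $e^{\ell(x)+\ell(y)}\le e^{2\gamma}e^{\beta(|x|+|y|)}$, so the exponential prefactor is absorbed and only $e^{-R'(|x|+|y|)}$ survives. The subtle step is then to extract $e^{-\lambda_0(t-2s_0)}$ uniformly in $t\ge 3s_0$ from the remaining series; for this I write
$$\sum_k e^{-\lambda_k(t-2s_0)}=e^{-\lambda_0(t-2s_0)}\sum_k e^{-(\lambda_k-\lambda_0)(t-2s_0)}\le e^{-\lambda_0(t-2s_0)}e^{\lambda_0 s_0}\sum_k e^{-\lambda_k s_0},$$
which is valid because $\lambda_k-\lambda_0\ge 0$ and $t-2s_0\ge s_0>0$, while the final sum is finite by Corollary \ref{cor:1}. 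The same argument handles $p$ via $|\ell(x)-\ell(y)|\le 2\gamma+\beta(|x|+|y|)$. Absolute convergence and uniform boundedness on compact subsets $[t_0,t_1]\times\RR^2$ follow similarly, with the coarser bound \eqref{eq:boundPsik} handling $t$ close to $0$; continuity of $r$ and $p$ then reduces to the continuity of $\tilde p$ from Proposition \ref{pro:1}(iv) combined with the continuity of $e^{\ell}$.

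For part (4), given $|\phi(y)|\le Ae^{\kappa|y|}$, choosing $R'>\kappa$ in \eqref{eq:bound2} yields
$$\int p(t,x,y)|\phi(y)|\,dy\le AFe^{-\lambda_0(t-2s_0)}e^{-R'|x|}\int e^{-(R'-\kappa)|y|}\,dy,$$
which is finite, bounded in $x$, and decays exponentially at $\pm\infty$ uniformly on $t\ge t_0$. To identify this integral with the probabilistic definition of $P_t\phi$, I would apply Girsanov \eqref{Girsanov} to reduce the question to $\widetilde P_t$ acting on the function $e^{-\ell}\phi$ of linear exponential growth, then approximate by truncations $\phi_N=\phi\,\ind_{[-N,N]}$ for which the density representation from part (2) already holds, and pass to the limit by dominated convergence on both sides; the probabilistic side is controlled because $\widetilde V$ is bounded above and Brownian motion has Gaussian moments. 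The main obstacle throughout is the careful bookkeeping in part (3): keeping exponents aligned so that $e^{\ell}$ is exactly compensated by the slack built into $R=R'+\beta$, while simultaneously retaining summability after factoring out $e^{-\lambda_0(t-2s_0)}$.
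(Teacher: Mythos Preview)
Your proof is correct and follows essentially the same route as the paper. In particular, your treatment of part (3) is exactly the paper's argument: set $R=R'+\beta$ in \eqref{eq:boundPsik2}, absorb $e^{|\ell(x)|+|\ell(y)|}\le e^{2\gamma+\beta(|x|+|y|)}$ into the decay $e^{-R(|x|+|y|)}$, and then factor $e^{-\lambda_0(t-2s_0)}$ out of $\sum_k e^{-\lambda_k(t-2s_0)}$, bounding the remaining sum by $\sum_k e^{-(\lambda_k-\lambda_0)s_0}<\infty$ via $t-2s_0\ge s_0$ and Corollary~\ref{cor:1}.

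One small remark: for the uniform boundedness on $[t_0,t_1]\times\RR^2$ when $t_0$ is too small for the condition $t>3s_0$ to hold, the bound \eqref{eq:boundPsik} alone is not enough, because it carries no decay in $|x|$ and therefore cannot absorb the factor $e^{\ell(x)\pm\ell(y)}$. The paper instead invokes \eqref{eq:boundPsik3}, which for any fixed $R>0$ and any $s\in(0,t_0]$ gives $e^{R|x|}|\Psi_k(x)|\le C_2(R,t_0)\,e^{\lambda_k s}/s^{1/4}$; choosing $s=t_0/3$ leaves $\sum_k e^{-\lambda_k(t-2t_0/3)}$ with $t-2t_0/3\ge t_0/3>0$, and the argument closes exactly as before. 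This is a cosmetic fix to your reference, not a gap in the strategy.
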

\begin{proof} Only part $(3)$ needs some extra argument. Notice that, 
by Proposition \ref{pro:1} with  $R=R'+\beta$ (see \eqref{eq:boundPsik2})
$$
\begin{array}{ll}
r(t,x,y) \vee p(t,x,y)&\hspace{-0.2cm}\le e^{2\gamma+\beta(|x|+|y|)}\sum\limits_{k\ge 0} e^{-\lambda_k t} |\psi_k(x)||\psi_k(y)|
\le e^{2\gamma} (C_1(R'))^2 e^{-R'(|x|+|y|)} \sum\limits_{k\ge 0} e^{-\lambda_k (t-2s_0)}\\
&\hspace{-0.2cm}\le e^{2\gamma} (C_1(R'))^2 e^{-\lambda_0(t-2s_0)} \sum\limits_{k\ge 0} e^{-(\lambda_k-\lambda_0) s_0}   e^{-R'(|x|+|y|)}
=F(R')e^{-\lambda_0(t-2s_0)}e^{-R'(|x|+|y|)},
\end{array}
$$
where $F(R')=e^{2\gamma} (C_1(R'))^2 \sum\limits_{k\ge 0} e^{-(\lambda_k-\lambda_0) s_0}$ and the bound is shown.
The second part is obtained by using the same proposition
with the bound provided by \eqref{eq:boundPsik3}.
\end{proof}

The next Theorem is a Yaglom-type of result

\begin{corollary} 
\label{cor:Yaglom density}
Under $H0$-$H4$, for all $x,y$ it holds
$$
\lim\limits_{t\to \infty} e^{\lambda_0 t} p(t,x,y)= e^{\ell(x)-\ell(y)}\Psi_0(x)\Psi_0(y)=
e^{-2\ell(y)}\Theta_0(x)\Theta_0(y)
$$
and $\Psi_0,\Theta_0$ have constant sign, that is, we can take them to be positive. A similar result holds for $r$.
\end{corollary}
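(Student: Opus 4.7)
The plan is to pass to the limit term by term in the spectral expansion \eqref{eq:density_Pt}, using the a priori bounds already developed in Proposition \ref{pro:1} as the dominating object, and then to deduce the sign statement from the strict positivity of the transition density $p(t,x,y)$.

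\textbf{Step 1: Setting up the limit.} Starting from \eqref{eq:density_Pt}, I write
\begin{equation*}
e^{\lambda_0 t} p(t,x,y) = e^{\ell(x)-\ell(y)}\,\Psi_0(x)\Psi_0(y) + e^{\ell(x)-\ell(y)}\sum_{k\ge 1} e^{-(\lambda_k-\lambda_0)t}\Psi_k(x)\Psi_k(y),
\end{equation*}
so it suffices to show the tail sum vanishes as $t\to\infty$ for fixed $x,y$.

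\textbf{Step 2: Dominating the tail.} I fix an arbitrary $s>0$ and split $e^{-\lambda_k t} = e^{-\lambda_k s}\, e^{-\lambda_k(t-s)}$. For $k\ge 1$ we have $e^{-\lambda_k(t-s)}\le e^{-\lambda_1(t-s)}$, so using the pointwise $\ell^1$-bound established in the proof of Proposition \ref{pro:1}$(iii)$, namely
$$\sum_{k\ge 0} e^{-\lambda_k s}\,|\Psi_k(x)\Psi_k(y)|\le \frac{e^{\tilde A s}}{\sqrt{2\pi s}},$$
I obtain
$$\Big|\sum_{k\ge 1} e^{-(\lambda_k-\lambda_0)t}\Psi_k(x)\Psi_k(y)\Big|\le e^{(\lambda_0-\lambda_1)(t-s)}\, e^{\lambda_0 s}\,\frac{e^{\tilde A s}}{\sqrt{2\pi s}}.$$
Since $\lambda_1>\lambda_0$, the right-hand side tends to $0$ as $t\to\infty$, which yields the claimed identity for $p$. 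The corresponding statement for $r$ follows identically from $r(t,x,y)=\sum_k e^{-\lambda_k t}\Theta_k(x)\Theta_k(y)$.

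\textbf{Step 3: Constant sign of $\Psi_0$ and $\Theta_0$.} The density $p(t,x,y)$ is strictly positive for every $t>0$ and every $x,y$, because it comes from the Feynman--Kac semigroup with continuous (hence locally bounded) weight $V$ applied to a drifted Brownian motion, whose own transition density is positive. Passing to the limit along Step 2 gives
$$e^{\ell(x)-\ell(y)}\,\Psi_0(x)\Psi_0(y) = \lim_{t\to\infty}e^{\lambda_0 t}p(t,x,y)\ge 0$$
for all $x,y$. Hence $\Psi_0(x)\Psi_0(y)\ge 0$ for all $x,y$, which forces $\Psi_0$ to be of constant sign; after possibly replacing $\Psi_0$ by $-\Psi_0$, we may take $\Psi_0\ge 0$. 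To upgrade this to strict positivity, I use the eigenfunction identity \eqref{eq:eigen tildeP}: since $\Psi_0$ is continuous, nonnegative, and not identically zero (as $\|\Psi_0\|_2=1$), and since the Gaussian kernel charges every open set, the representation
$$\Psi_0(y)=e^{\lambda_0 t}\EE_y\!\left(e^{\int_0^t \widetilde V(B_s)ds}\Psi_0(B_t)\right)$$
is strictly positive at every $y$. Then $\Theta_0=\Psi_0 e^{\ell}$ is also strictly positive.

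\textbf{Main obstacle.} The only nontrivial point is the interchange of limit and infinite sum in Step 2: everything reduces to having a summable majorant uniform in $t$ on a tail $t\ge s$, and this is exactly provided by the bound $\sum_{k\ge 0}e^{-\lambda_k s}|\Psi_k(x)\Psi_k(y)|\le \frac{e^{\tilde A s}}{\sqrt{2\pi s}}$ already proved. Once that is in hand, the rest is essentially the Perron--Frobenius observation that a positive kernel forces the dominant eigenfunction to have constant sign.
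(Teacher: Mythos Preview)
Your proof is correct and follows the same route the paper implicitly relies on: the paper's own proof is a single sentence (``follows directly from the representation of $p(t,x,y)$''), and what you have written is precisely the natural unpacking of that sentence---split off the $k=0$ term, dominate the tail using the $\ell^1$ bound \eqref{eq:bound1'} together with the spectral gap $\lambda_1>\lambda_0$, and read off the sign of $\Psi_0$ from the nonnegativity of the limit. Your Step~3, which upgrades $\Psi_0\ge 0$ to $\Psi_0>0$ via the eigenfunction identity \eqref{eq:eigen tildeP}, is a detail the paper leaves entirely to the reader, so in that respect your argument is more complete than the original.
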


\begin{proof} The assertion follows directly  from the representation of $p(t,x,y)$. 
\end{proof}

\section{Proof of the results}
\label{sec:main results}

\subsection{Proof of Theorem \ref{the:mainb}}
\label{sec:proof the mainb}

In the proof below we do the estimates in $L^q(dx)$. Similar estimates can be done in $L^q(\rho(dx))$.

\begin{proof} 
$(1)$. Consider $g\in L^q(dx)$, then we have
$$
\begin{array}{ll}
P_t(g-\Pi(g))(x)\hspace{-0.2cm}&=e^{\ell(x)}\sum\limits_{k\ge 0} e^{-\lambda_k t} \Psi_k(x) 
\int \Psi_k(y) (g(y)-\Pi(g)(y)) e^{-\ell(y)} dy\\
\hspace{-0.2cm}&=e^{\ell(x)}\sum\limits_{k\ge 1} e^{-\lambda_k t} \Psi_k(x) \int \Psi_k(y) g(y) e^{-\ell(y)} dy.
\end{array}
$$
Then, according to Proposition \ref{pro:1} for $R=\beta+1, t_0=2R/\widetilde{E}=4R/E$,  there exists a constant
$C=e^{\gamma}C_1(R)$, where $C_1(R)$ is given in \eqref{eq:C1R}, so that for all $k,x$ it holds
$$
|\Psi_k(x)|e^{\pm \ell(x)}\le |\Psi_k(x)|e^{\gamma+\beta|x|}\le C e^{\lambda_k t_0} e^{-|x|},
$$
and, for $p$ the conjugate  of $q$
$$
\left|\int \Psi_k(y) e^{-\ell(y)} g(y) \, dy\right|\le \|\Psi_ke^{-\ell}\|_p\|g\|_q\le Ce^{\lambda_k t_0}
\left(\int e^{-p|x|} dx\right)^{1/p} \|g\|_q \le 2C e^{\lambda_k t_0} \|g\|_q.
$$
Hence, we obtain that
$$
\begin{array}{ll}
\|P_t(g-\Pi(g))\|_r \hspace{-0.2cm}&\le 2C\|g\|_q \sum\limits_{k\ge 1} e^{-\lambda_k (t-t_0)} \|\Psi_ke^{\ell}\|_r 
\le 4C^2 \|g\|_q \sum\limits_{k\ge 1} e^{-\lambda_k (t-2t_0)}\\
\\
\hspace{-0.2cm}&=e^{-\lambda_1(t-2t_0)}4C^2 \sum\limits_{k\ge 1} e^{-(\lambda_k -\lambda_1)(t-2t_0)}\|g\|_q
\le K e^{-\lambda_1 t} \|g\|_q
\end{array}
$$
where $K=4C^2 e^{2\lambda_1 t_0} \sum_{k\ge 1} e^{-(\lambda_k -\lambda_1) t_0}<\infty$.
For the second part we notice that $P_t(\Pi(g))=e^{-\lambda_0 t}\Pi(g)$, then we get
$$
\|e^{\lambda_0 t}P_t(g)-\Pi(g)\|_r\le K\|g\|_r e^{-(\lambda_1-\lambda_0)t},
$$
proving the result in this case.
\bigskip

$(2)$. The proof is similar, where this time we take $R=\kappa+\beta+1, C=e^\gamma C_1(R), t_1=2R/\widetilde{E}$ 
to bound
$$
\left|\int \Psi_k(y) e^{-\ell(y)} g(y) \, dy\right|\le A C e^{\lambda_k t_1}\int e^{-|y|} dy=2AC e^{\lambda_k t_1},
$$
and we bound $\|\Psi_k e^{\ell}\|_\infty\le e^\gamma C_1(\beta+1) e^{\lambda_k t_0}$, to get
$$
\|P_t(g-\Pi(g))\|_\infty \le A2e^{2\gamma}C_1(\kappa+\beta+1) C_1(\beta+1) 
\sum\limits_{k\ge 1} e^{-\lambda_k (t-t_1-t_0)}\le AD e^{-\lambda_1 t},
$$
where 
\begin{equation}
\label{eq:D(k)}
D=D(\kappa)= 2e^\gamma C_1(\kappa+\beta+1) C_1(\beta+1) e^{\lambda_1(t_1+t_0)} 
\sum_{k\ge 1} e^{-(\lambda_k -\lambda_1)(t_1+t_0)}.
\end{equation}
The last part of $(2)$ is shown as above.
\end{proof}

\subsection{Proof of Theorem \ref{the:qsd} and Theorem \ref{the:domain}}
\label{sec:proofs qsd domain}

\begin{proof} (Theorem \ref{the:qsd}) Assume that $\phi$ is a nonnegative function and with compact support. Then,
$$
\begin{array}{ll}
\int \EE_x\left(e^{\int_0^t V(X_s)ds} \phi(X_t)\right) \Theta_0(x) e^{-2\ell(x)} dx=
\int_{\RR^2} \Theta_0(x)  \phi(y) p(t,x,y) \phi(y) e^{-2\ell(y)}\, dy\, e^{-2\ell(x)}\, dx\\
\\
=\int_{\RR^2} \Theta_0(x) \phi(y) 
\sum\limits_{k\ge 0} e^{-\lambda_k t}\Theta_k(x)\Theta_k(y)\,e^{-2\ell(y)}\, dy\, e^{-2\ell(x)}\, dx\\
\\
=\int \Theta_0(x) \sum\limits_{k\ge 0} e^{-\lambda_k t} \Theta_k(x) (\int \Theta_k(y)  \phi(y) e^{-2\ell(y)}\, dy) e^{-2\ell(x)}\,dx\\
\\
=e^{-\lambda_0 t}\int  \Theta_0(y)\, \phi(y) \,e^{-2\ell(y)}\, dy,
\end{array}
$$
where the last three equalities follow from Fubini's Theorem, the absolute convergence of the series 
and the bound given in \eqref{eq:bound1}, the fact that $\phi$ is in $L^2(\rho(dy))$ 
and the orthonormality of $(\Theta_k)_k$. Hence, the nonnegative measures $\mu,\xi$ defined by
$$
\int \phi(y) \mu(dy):=\int \EE_x\left(e^{\int_0^t V(X_s)ds} \phi(X_t)\right) \Theta_0(x) e^{-2\ell(x)} dx
$$ 
and 
$$
\int \phi(y) \xi(dy):=e^{-\lambda_0 t}\int \phi(y)\, \Theta_0(y) e^{-2\ell(y)}\, dy,
$$
coincide for all $\phi$ nonnegative measurable and with compact support. Since $\xi$ is a finite measure (thanks to \eqref{eq:1}),
we conclude $\mu=\xi$. In particular, for all $\phi$ bounded and measurable it holds
$$
\int \EE_x\left(e^{\int_0^t V(X_s)ds} \phi(X_t)\right) \Theta_0(x) e^{-2\ell(x)} dx=
e^{-\lambda_0 t}\int \phi(x) \Theta_0(x)   e^{-2\ell(x)}\, dx
$$ 
If we take $\phi\equiv 1$ we get 
$$
\int \EE_x\left(e^{\int_0^t V(X_s)ds} \right) \Theta_0(x) e^{-2\ell(x)} dx=
e^{-\lambda_0 t}\int \Theta_0(x) e^{-2\ell(x)}\, dx,
$$ 
from which the result follows.
\end{proof}

\begin{proof} (Theorem \ref{the:domain}) For every $t_0>0$, there exists a constant $K(R,t_0)$ so that (see \eqref{eq:bound2}), 
for every $t>2t_0$ and $\phi$ bounded, we have
$$
\left|e^{\lambda_0 t} \EE_x\left(e^{\int_0^t V(X_s)ds} \phi(X_t)\right)\right|\le K(R,t_0)  \|\phi\|_\infty \, e^{-R|x|}.
$$
So, the dominated convergence theorem allows us to conclude
$$
\lim\limits_{t \to \infty} e^{\lambda_0 t} \int \EE_x\left(e^{\int_0^t V(X_s)} \phi(X_s)\right) \mu(dx)
=\int \Psi_0(z) e^{-\ell(z)} \phi(z)\, dz \int \Psi_0(x) e^{\ell(x)} \mu(dx).
$$
A similar relation holds for $\phi\equiv 1$, from where the result follows.
\end{proof}

\subsection{Proof of Theorem \ref{the:Q-process}: The generator of the $Q$-process}
\label{sec:Q-process}

\begin{proof} We first notice that for $t>s\ge 0$ it holds
$$
\EE_x\left(e^{\int_0^t V(X_u) du} \phi(X_s)\right)=
\EE_x\left(e^{\int_0^s V(X_u) du} \phi(X_s)\EE_{X_s}\left(e^{\int_0^{t-s} V(X_u) du}\right)\right)
$$
From what we have proved, for all $y$ it holds (see  Theorem \ref{thm-BP})
$$
\lim\limits_{t\to \infty} e^{\lambda_0(t-s)}\EE_{y}\left(e^{\int_0^{t-s} V(X_u) du}\right)
=e^{\ell(y)}\Psi_0(y) \int \Psi_0(z) e^{-\ell(z)}\, dz.
$$
Using Theorem \ref{the:mainb} part $(2)$ (see the definition of $t_0,t_1$ and $D=D(1)$)
and for all $(t-s)\ge 3(t_0+t_1), y\in \RR$, it holds 
$$
e^{\lambda_0(t-s)}\EE_{y}\left(e^{\int_0^{t-s}V(X_u) du}\right)
=e^{\lambda_0(t-s)} P_{t-s}(1)(y)\le C,
$$
where $C=D+\|\Pi(1)\|_\infty$.
Therefore, using the dominated convergence theorem, 
we conclude the proof.
\end{proof}
\bigskip

Recall that $(Q_t)$ is given by 
$$
Q_t(\phi)(x)=\EE_x\left(e^{\int_0^t V(X_u) du} \,\frac{e^{\lambda_0 t} e^{\ell(X_t)} \Psi_0(X_t)}{e^{\ell(x)}\Psi_0(x)} \,  \phi(X_t)\right), 
$$
whose transition kernel is then given by
\begin{equation}
\label{eq:density_Q_t}
q(t,x,y)=e^{\lambda_0 t}\frac{\Psi_0(y)}{\Psi_0(x)} e^{-\ell(x)+\ell(y)}p(t,x,y)=
e^{\lambda_0 t}\frac{\Theta_0(y)}{\Theta_0(x)}p(t,x,y)
=e^{\lambda_0 t}\frac{\Psi_0(y)}{\Psi_0(x)} \tilde p(t,x,y).
\end{equation}
It follows from the results in Section \ref{sec:1} that $q$ satisfies the following equation
$$
\partial_t q(t,x,y)=(\mathscr{L}^Q)_x^*\,q(t,x,y),
$$
where 
$$
(\mathscr{L}^Q)^*(u)=\frac12 u''-(\Psi'_0/\Psi_0) u'+
2\left(\lambda_0+\widetilde V\Psi_0+\frac12 (\Psi'_0/\Psi_0)^2\right) u.
$$
We notice that $\eta(dx)=\Psi^2_0(x) dx$ is an invariant probability measure for $(Q_t)$, which
can be checked by proving that $(\mathscr{L}^Q)^*(\Psi_0)=0$ or by using \ref{eq:eigen tildeP}. Indeed, it holds
$$
\int q(t,x,y)\Psi_0^2(x) \, dx= \Psi_0(y)  e^{\lambda_0 t} \int \tilde p(t,x,y) \Psi_0(x) \, dx=
\Psi_0(y)  e^{\lambda_0 t} \int \tilde p(t,y,x) \Psi_0(x) \, dx=\Psi_0^2(y).
$$
Notice that the generator of the $Q$-process is
$$
\mathscr{L}^Q(u)=\frac12u''+ \frac{\Psi'_0}{\Psi_0} u',
$$
which corresponds to the stochastic differential equation
$$
dY_t=d B_t+ \frac{\Psi'_0(Y_t)}{\Psi_0(Y_t)}\, dt.
$$

\subsection{Proof of Theorem \ref{the:spine}}

\begin{proof} The first thing to compute is the joint density for
$Y^T_s, Y^T_{t+s}$, where $0\le s\le t+s\le T$.
We consider $\phi,\psi$ two bounded continuous functions and 
$$
\begin{array}{ll}
m_T(x) \EE_x( \phi(Y^T_s) \Psi(Y^T_{t+s}))&\hspace{-0.2cm}
=\EE_x(e^{\int_0^T V(X_u)du} \phi(X_s) \psi(X_{t+s}))\\
&\hspace{-0.2cm}=\EE_x\left(e^{\int_0^s V(X_u)du} \phi(X_s)
\EE_{X_s}\left(e^{\int_0^t V(X_u)du}\psi(X_t)
\EE_{X_t}\left(e^{\int_0^{T-(t+s) V(X_u)du}}\right)\right)\right).
\end{array}
$$
From here we deduce that the joint density, with respect to $dy\,dz$ is
\begin{equation}
\label{eq:density_s_t}
\PP_x( Y^T_s\in dy, Y^T_{t+s}\in dz)=p(s,x,y)p(t,y,z)
\frac{m_{T-(t+s)}(z)}{m_T(x)}\, dy\, dz
\end{equation}
In particular, if the initial distribution of $Y^T_0$ is 
$e^{\lambda_0 T}m_T(x) \nu(dx)$, we get the distribution of $Y^T_s$ has density
with respect to Lebesgue measure $dy$ given by (take $T=t+s$ in the previous equality)
$$
\begin{array}{ll}
e^{\lambda_0 T}\int \Theta_0(x) e^{-2\ell(x)} p(s,x,y)p(T-s,y,z)dxdz&\hspace{-0.2cm}
=e^{\lambda_0 (T-s)}\Theta_0(y) e^{-2\ell(y)} \int p(T-s,y,z)dz\\
&\hspace{-0.2cm}=e^{\lambda_0 (T-s)}\Theta_0(y) e^{-2\ell(y)}m_{T-s}(y).
\end{array}
$$
Here we have used that $\Theta_0 e^{-2\ell}$ is a left eigenvector 
for $P_s$ with eigenvalue $e^{-\lambda_0 s}$
(this is equivalent to say that $\nu$ is a q.s.d. for $(P_t)$).
So, we get
$$
\EE_{e^{\lambda_0 T}m_T\nu}(Y^T_s\in dy)=e^{\lambda_0 (T-s)}m_{T-s}(y)\nu(dy).
$$
This implies that $Y^T_T$ has distribution $\nu$, if the initial 
distribution is $e^{\lambda_0 T}m_T(x)\nu(dx)$. 
Now we compute the conditional density for the reversed process, 
when the initial distribution is
$\PP\left(\stackrel{\!\!\!\longleftarrow}{Y^T_0}\in dy\right)=\nu(dy)$
$$
\PP\left(\,\,\stackrel{\hspace{-0.4cm}\longleftarrow}{Y^T_{u+r}}\in dy \,\,\, \Big | \, 
\stackrel{\!\!\!\longleftarrow}{Y^T_{r}}=z\right)
=\PP\left(Y^T_{T-(u+r)}\in dy \,\Big|\, Y^T_{T-r}=z\right).
$$
We put $s=T-(u+r)$ and $t+s=T-r$, that is $t=u$ in \eqref{eq:density_s_t} and using the 
marginal density for $\stackrel{\!\!\!\longleftarrow}{Y^T_{r}}$
\begin{equation}
\label{eq:density_reversed}
\begin{array}{ll}
\hspace{-0.4cm}\PP\left(\,\,\stackrel{\hspace{-0.4cm}\longleftarrow}{Y^T_{u+r}}\in dy \,\,\,
\Big |\, \stackrel{\!\!\!\longleftarrow}{Y^T_{r}}=z\right)&\hspace{-0.2cm}=
\frac{1}{e^{\lambda_0 r}m_{r}(z)\theta_0(z)e^{-2\ell(z)}}
\int p(s,x,y)p(u,y,z)\frac{m_{r}(z)}{m_T(x)} m_T(x)
e^{\lambda_0 T}\Theta_0(x)e^{-2\ell(x)}dx\\
\\
&\hspace{-0.2cm}=e^{\lambda_0(T-r)} 
\frac{p(u,y,z)}{\Theta_0(z)} e^{2\ell(z)} 
\int p(s,x,y)\Theta_0(x)e^{-2\ell(x)}dx\\
\\
&\hspace{-0.2cm}=e^{\lambda_0(T-r-s)} 
\frac{\Theta_0(y)}{\Theta_0(z)} e^{2\ell(z)-2\ell(y)} p(u,y,z)=e^{\lambda_0 u} \frac{\Theta_0(y)}{\Theta_0(z)} p(u,z,y)\\
\\
&\hspace{-0.2cm}=q(u,z,y).
\end{array}
\end{equation}
Here, we have used again that $\Theta_0 e^{-2\ell}$ is a left eigenvector of $P_s$ with eigenvalue $e^{-\lambda_0s}$ and 
the symmetry relation $e^{2\ell(z)-2\ell(y)} p(u,y,z)=p(u,z,y)$ (see \eqref{eq:density_Pt}). 
The last equality in \eqref{eq:density_reversed} is given in \eqref{eq:density_Q_t} . The result follows.
\end{proof}

\section{Example : Case $V(x)=1-x^2/2, dX_t=\sigma(dB_t-c dt)$}

We develop here our approach in the case studied in \cite{calvez22}, where $b\equiv 1$, $d(x)=x^2/2$ and the underlying diffusion process is a drifted Brownian motion with variance $\sigma^2$ and drift $c \sigma$, $\sigma$ being the mutation scale parameter. Indeed in this case, we will have explicit computations involving Hermite polynomials. 
\medskip 

We define as in the  above framework the semigroup
$$
P_t(\phi)(x) =\EE_x\Big(e^{\int_0^t  V(X_s)ds} \phi(X_t)\Big)=\EE_0\Big(e^{\int_0^t V(x+\sigma(B_s -cs))ds} \phi(x+\sigma(B_t -ct))\Big).
$$
Using Girsanov's transform (see \eqref{Girsanov}),  we obtain
$$
P_t(\phi)(x) = \EE_0\Big(e^{\int_0^t (V(x+\sigma W_s)- c^2/2)ds} e^{-cW_t}\phi(x+\sigma W_t)\Big),
$$
where $W$ is a new Brownian motion. 
Writing $\sigma W_t=\widehat B_{\sigma^2 t}$ with another Brownian motion $\widehat B$, we obtain
$$
P_t(\phi)(x) = e^{cx/\sigma}\EE_0\left(\exp\left(\frac{1}{\sigma^2}\int_0^{\sigma^2 t} \hspace{-0.4cm} V(x+\widehat B_{u})- c^2/2\, du
-\frac{c}{\sigma}\left(x+ \widehat B_{\sigma^2 t}\right)\right)
\,\phi\left(x+ \widehat B_{\sigma^2 t}\right)\right).
$$
Writing $V(x) = 1-x^2/2$, we finally obtain that 
$$ P_t(\phi)(x)=e^{(1-c^2/2)t} e^{cx/\sigma} R_{\sigma^2 t} (\varphi)(x),
$$
where $\varphi(y)=e^{-\frac{c}{\sigma} y} \phi(y)$, 
and
$$
R_t(g)(y)=\EE_y\Big(e^{-\frac{1}{2\sigma^2} \int_0^t (\widehat B_u)^2 du} g(\widehat B_t)\Big).
$$
Notice that the generator of the semigroup $(R_t)$ is given for suitable functions $v$ by
$$
{\cal L}_R\, v= \frac{1}{2} \partial_x^2 v - \frac{x^2}{2 \sigma^2} v.
$$
The eigenvectors and eigenvalues associated to this generator satisfy
$$
\frac12 \Psi_k''(x)- \frac{1}{2\sigma^2} x^2 \Psi_k(x)=-\lambda_k \Psi_k(x).
$$
Consider $g_k(x)=\Psi_k(\sqrt{\sigma}x)$ which satisfies the equation
$$
\frac12 g''(x)-\frac12 x^2 g(x)=-\sigma\lambda_k g(x)=-\nu_k g(x),
$$
whose solutions are the Hermite functions
$H_k(x) e^{-\frac12 x^2 }$, and associated eigenvalues are $\nu_k=k+1/2$, 
with $H_k$ the Hermite polynomials of degree $k$ (see \cite{Szego} page 106). So, 
$$
\Psi_k(x)=g_k(x/\sqrt{\sigma})=C_k H_k(x/\sqrt{\sigma}) e^{-\frac{x^2}{2\sigma}}, \, \lambda_k=\frac{k+1/2}{\sigma}
$$
and $C_k$ is a normalizing constant, in order to have $\int \Psi_k^2(x) dx=1$.
Since $\int H_k^2(x) e^{-x^2} \, dx=2^k k! \sqrt{\pi}$, we conclude that
$$
C_k=\sigma^{-1/4}\frac{1}{\sqrt{2^k k! \sqrt{\pi}}}
$$
In particular
$$
\Psi_0(x)=\left(\frac{1}{\sigma \pi}\right)^{1/4}e^{-\frac{x^2}{2\sigma}}.
$$
On the other hand, Cramer inequality (see \cite{Erdelyi} page 208 and \cite{Indritz}) gives the following refinement of \eqref{eq:boundPsik} in this context, for all $x, k$
$$
|\Psi_k(x)|\le \left(\frac{1}{\sigma\pi}\right)^{1/4}.
$$
For the moment we have the asymptotic
$$
\lim\limits_{t\to \infty}e^{\frac{1}{2\sigma}t} e^{\frac{c}{\sigma}x} R_t(e^{-\frac{c}{\sigma}\bullet}\phi(\bullet))(x)
=\Psi_0(x)e^{\frac{c}{\sigma}x} \int \Psi_0(y)e^{-\frac{c}{\sigma}y} \phi(y) dy.
$$
This translates into the original semigroup, with potential $V(x)=1-\frac12 x^2$, $dX_t=\sigma(dB_t-cdt), X_0=x$
and $\lambda_0=1-\frac{c^2}2-\frac\sigma2$:
$$
\lim\limits_{t\to \infty} e^{\lambda_0 t}\EE_y\left(e^{\int_0^t V(X_s) \, ds} \phi(X_t) \right)=e^{-\frac{x^2}{2\sigma}+\frac{c}{\sigma}x}
\!\int \!\!\frac{1}{\sqrt{\pi\sigma}} e^{-\frac{y^2}{2\sigma}-\frac{c}{\sigma}y} \phi(y) dy
=e^{\frac{c^2}{\sigma}}e^{-\frac1{2\sigma}(x-c)^2}\!\!\!\int \!\! \frac{1}{\sqrt{\pi\sigma}} e^{-\frac1{2\sigma}(y+c)^2} \phi(y) dy.
$$
In particular for $\phi=1$, we get for $m_t(x)=\EE_y\left(e^{\int_0^t V(X_s)\, ds}  \right)$
$$
\lim\limits_{t\to \infty} e^{\lambda_0 t} m_t(x)= 
\sqrt{2}\,e^{\frac{c^2}{\sigma}}e^{-\frac1{2\sigma}(x-c)^2}.
$$
Notice that an explicit form for the function  $m_t$ has been given in \cite{calvez22}, providing
another way to calculate this limit.

Notice that $\ell(x)=\frac{c}{\sigma}$ and recall that $\rho(dx)=e^{-2\ell(x)}dx$. 
The principal right eigenvector for $(P_t)$, normalized to have $L^2(\rho(dx)$-norm equal to 1
is
$$
\Theta_0(x)=e^{\frac{c^2}{2\sigma}}\frac{e^{-\frac{(x-c)^2}{2\sigma}}}{(\pi \sigma)^{1/4}}.
$$
The unique q.s.d. is given by 
$$
d\nu(x)=\frac{1}{\sqrt{2\pi \sigma}}e^{-\frac{(y+c)^2}{2\sigma}}.
$$
The $Q$ process is given by the stochastic differential equation
$$
d\hat Y_t=-\sigma \hat Y_t+\sigma dB_t.
$$
whose invariant measure is $\Psi_0^2(x)= \frac{1}{\sqrt{\pi \sigma}} e^{-\frac{x^2}{\sigma}}$.
\bigskip

\appendix
\section*{Appendix A: The heat equation for $p$}
\label{sec:1}

In this appendix we shall prove that $p$ is a positive solution of the heat equation 
$\partial_t p(t,x,y)=\mathscr{L}_y p(t,x,y)$. To this end we first start with $\tilde p$.

We first notice that the series $\sum\limits e^{-\lambda_k t} \Psi_k(x) \Psi_k''(y)$ converges absolutely and uniformly
on compact sets in $(0,t)\times \RR^2$. Indeed, consider $K=[t_0,t_1]\times [-c,c]^2$, where $0<t_0<t_1<\infty
,0<c<\infty$.  For $(t,x,y)\in K$, we have
$$
|\Psi_k''(y)|=|2\widetilde V(y)+2\lambda_k| |\Psi_k(y)|\le 
2\left(\max\limits_{|y|\le c} |\widetilde V(y)|+|\lambda_k|\right) |\Psi_k(y)|,
$$ 
which implies that for some finite constant $C$ (see Proposition \ref{pro:1} formula \eqref{eq:boundPsik})
$$
\sum\limits_{k\ge 0} e^{-\lambda_k t} |\Psi_k(x)||\Psi_k''(y)| \le 
C \sum\limits_{k\ge 0} e^{-\lambda_k t}
\left(\max\limits_{|y|\le c} |V(y)|+|\lambda_k|\right)(\lambda_k+\tilde A)^{1/2}.
$$
The series in the right hand side is converging. This shows
the claim and moreover 
$$
\zeta(t,x,y):=\sum\limits_{k\ge 0} e^{-\lambda_k t} \Psi_k(x) \Psi_k''(y),
$$
is well defined and continuous in $K$. We now need to control the growth of $(\Psi_k'(x^*))_k$ for some
$x^*$ fixed.
\begin{lemma} There exist two finite constants $A_1\ge 0,A_2\ge 0$ and 
$x^*\in \RR$ such that, for all $k$, we have
\begin{equation}
\label{eq:growthPsi'k}
|\Psi_k'(x^*)|\le A_1+A_2 \lambda_k.
\end{equation}
\end{lemma}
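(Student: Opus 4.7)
The plan is to take $x^* = 0$ and obtain the bound by combining a global $L^2$--estimate on $\Psi_k'$ with a short propagation argument from a suitably chosen nearby point $c_k$. The inputs are the eigenfunction equation $\Psi_k'' = -2(\widetilde V + \lambda_k)\Psi_k$, the normalization $\int_\RR \Psi_k^2\, dy = 1$, the pointwise bound $|\Psi_k(y)| \le (e/\pi)^{1/4}(\lambda_k + \tilde A)^{1/4}$ from \eqref{eq:boundPsik}, and the super-exponential decay supplied by \eqref{eq:boundPsik2}, which legitimates the integrations by parts below.

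The first step is the $L^2$--estimate. Multiplying the ODE by $\Psi_k$, integrating over $\RR$, and integrating by parts (boundary terms vanish by \eqref{eq:boundPsik2}) gives
$$
\int_\RR (\Psi_k'(y))^2\, dy \;=\; 2\lambda_k + 2\int_\RR \widetilde V(y)\Psi_k^2(y)\,dy \;\le\; 2(\lambda_k + \tilde A).
$$
Restricted to the fixed interval $[-1,1]$, the mean value theorem for the continuous nonnegative function $(\Psi_k')^2$ supplies a point $c_k \in [-1,1]$ with $(\Psi_k'(c_k))^2 \le \lambda_k + \tilde A$. Since $c_k$ depends on $k$, the remaining task is to propagate this control to the single point $x^* = 0$.

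The second step is the propagation. Multiplying the ODE by $2\Psi_k'$ yields the identity
$$
\frac{d}{dy}(\Psi_k'(y))^2 \;=\; -2\lambda_k\,\frac{d}{dy}\Psi_k^2(y) \;-\; 4\,\widetilde V(y)\Psi_k(y)\Psi_k'(y),
$$
and integration from $c_k$ to $0$ (an interval of length at most $1$) together with the triangle inequality gives
$$
(\Psi_k'(0))^2 \;\le\; (\Psi_k'(c_k))^2 \;+\; 2\lambda_k\bigl(\Psi_k(0)^2 + \Psi_k(c_k)^2\bigr) \;+\; 4\left|\int_{c_k}^0 \widetilde V\,\Psi_k\,\Psi_k'\,dy\right|.
$$
The first term is $\le \lambda_k + \tilde A$ by Step~1; the second is $\le 4(e/\pi)^{1/2}\lambda_k(\lambda_k + \tilde A)^{1/2}$ by \eqref{eq:boundPsik}; the third is at most $4S\sqrt{2(\lambda_k + \tilde A)}$ by Cauchy--Schwarz combined with $\|\Psi_k\|_{L^2} = 1$ and the $L^2$--bound on $\Psi_k'$, where $S = \sup_{|y|\le 1}|\widetilde V(y)| < \infty$. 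Summing these shows $(\Psi_k'(0))^2 = O(\lambda_k^{3/2})$, hence $|\Psi_k'(0)| = O(\lambda_k^{3/4})$, which is in particular dominated by $A_1 + A_2\lambda_k$ for suitable constants.

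The chief subtlety is that the more natural energy $\tfrac14(\Psi_k')^2 + \tfrac12(\widetilde V+\lambda_k)\Psi_k^2$ has derivative $\tfrac12\widetilde V'(y)\Psi_k^2(y)$, which would immediately give an almost sharp bound on $(\Psi_k'(0))^2$ if $\widetilde V$ were $C^1$. Under the present hypotheses, however, $V$ is only assumed continuous and so $\widetilde V$ need not be differentiable; the argument above circumvents this by working with integrated quantities in which $\widetilde V$ only appears undifferentiated, at the modest cost of a sub-linear (rather than optimal) bound on $|\Psi_k'(0)|$ which nevertheless suffices for \eqref{eq:growthPsi'k}.
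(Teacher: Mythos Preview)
Your argument is correct and in fact yields the sharper bound $|\Psi_k'(0)|=O(\lambda_k^{3/4})$, more than enough for \eqref{eq:growthPsi'k}. The route, however, is genuinely different from the paper's. The paper picks $x^*$ as a critical point of $\Psi_2$ between its two zeros and uses the Wronskian $W=\tfrac12(\Psi_2'\Psi_k-\Psi_2\Psi_k')$, whose derivative is $(\lambda_k-\lambda_2)\Psi_2\Psi_k$; integrating from a zero of $\Psi_2$ to $x^*$ and using $\|\Psi_2\|_2=\|\Psi_k\|_2=1$ together with \eqref{eq:boundPsik} gives the linear bound directly. That approach is entirely local-in-$x$ (the integral is over a finite interval) and never needs information about $\Psi_k'$ at infinity. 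Your energy approach is more elementary, works at an arbitrary fixed point, and gives a better exponent, at the price of one global integration by parts.

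One small point: you invoke \eqref{eq:boundPsik2} to dismiss the boundary terms in Step~1, but that estimate concerns $\Psi_k$, not $\Psi_k'$. What is actually needed is $\Psi_k(y)\Psi_k'(y)\to 0$ as $|y|\to\infty$. This is standard for Schr\"odinger eigenfunctions with confining potential: beyond the last zero of $\Psi_k$ one has (say) $\Psi_k>0$ and $\Psi_k''=-2(\widetilde V+\lambda_k)\Psi_k>0$, so $\Psi_k$ is convex and tends to $0$, forcing $\Psi_k'<0$ and increasing, hence $\Psi_k'\to 0$. It would be worth stating this explicitly rather than pointing to \eqref{eq:boundPsik2}.
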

\begin{proof}
It is well known that each $\Psi_k$ has exactly $k$ zeros (see Theorem 3.5 in \cite{Berezin_Shubin}). 
Then, $u=\Psi_2$  has two zeros $x_1<x_2$. We conclude, there exists
$x^*\in(x_1,x_2)$ such that $u'(x^*)=0$, and therefore $|u(x^*)|>0$. Now, we consider $v=\Psi_k$, 
for $k\ge 3$ and the Wronskian 
$$
W(x)=\frac12(u'v-uv').
$$
Since $W'(x)=\frac12(u''v-uv'')=(\lambda_k-\lambda_2) uv$, we get
$$
\begin{array}{ll}
W(x^*)-W(x_1)\hspace{-0.2cm}&=\frac12 (u'(x^*)v(x^*)-u(x^*)v'(x^*))-\frac12(u'(x_1)v(x_1)-u(x_1)v'(x_1))\\
\hspace{-0.2cm}&=-\frac12 u(x^*)v'(x^*)-\frac12 u'(x_1)v(x_1)=(\lambda_k-\lambda_2)\int_{x_1}^{x^*} u(z)v(z) \ dz,
\end{array}
$$
and therefore
$$
\begin{array}{ll}
|v'(x^*)| |u(x^*)|\hspace{-0.2cm}&\le |v(x_1)||u'(x_1)|+2(\lambda_k-\lambda_2)\int\limits_{-\infty}^\infty |u(z)v(z)| \, dz\\
\hspace{-0.2cm}&\le  \left(\frac{e}{\pi}\right)^{1/4}|u'(x_1)|\, (\lambda_k+\tilde A)^{1/4}+2(\lambda_k-\lambda_2)
\left(\int\limits_{-\infty}^\infty |u(z)|^2\, dz\right)^{1/2} 
\left(\int\limits_{-\infty}^\infty |u(z)|^2\, dz\right)^{1/2}\\
\hspace{-0.2cm}&\le \left(\frac{e}{\pi}\right)^{1/4}  |u'(x_1)| \, (\lambda_k+\tilde A)^{1/4}+2(\lambda_k-\lambda_2)\\
\hspace{-0.2cm}&\le \left(\frac{e}{\pi}\right)^{1/4} |u'(x_1))| \,  (\lambda_k+\tilde A)^{1/4}+2|\lambda_2|+ 2\lambda_k.
\end{array}
$$
Consider $k_0=\inf\{k: \lambda_k+\tilde A\ge 1\}$, then for $k\ge k_0$,  we have 
$$
|\Psi_k'(x^*)|\le D_1+A_2\lambda_k,
$$
with $D_1=\frac{\left(\frac{e}{\pi}\right)^{1/4}|u'(x_1))| \tilde A + 2|\lambda_2|}{|u(x^*)|}$ and $A_2=
\frac{\left(\frac{e}{\pi}\right)^{1/4}|u'(x_1))| + 2}{|u(x^*)|}$.
The result follows by taking 
$$
A_1=D_1\vee \max\{|\Psi_j'(x^*)|+A_2|\lambda_j|: 0\le j\le k_0-1\}.
$$

\end{proof}
From the previous Lemma, the series
$$
S_1(t,x):=\sum\limits_{k\ge 0} e^{-\lambda_k t} \Psi_k(x) \Psi_k'(x^*), \, 
S_2(t,x,y):=\sum\limits_{k\ge 0} e^{-\lambda_k t} \Psi_k(x) (\Psi_k'(x^*) y+\Psi_k(0)) 
$$ 
are absolutely convergent, uniformly on $K$ and continuous there. 
\medskip

The DCT allow us to show that
$$
\begin{array}{l}
\int_{0}^y\int_{x^*}^z \zeta(t,x,w) \, dw\, dz+\int_0^y S_1(t,x) \, dz+ \tilde p(t,x,0)\\
\\
=\sum\limits_{k\ge 0} e^{-\lambda_k} \Psi_k(x)
\left(\int_0^y\int_{x^*}^z \Psi''_k(w) dw\, dz+\Psi'_k(x^*)y+\Psi_k(0)\right)\\
\\
=\tilde p(t,x,y).
\end{array}
$$ 
This shows that $\tilde p(t,x,y)$ is twice continuously differentiable in $y$ and 
$$
\begin{array}{l}
\partial_y \tilde p(t,x,y)=\sum\limits_{k\ge 0} e^{-\lambda_k t} \Psi_k(x) \Psi_k'(y), \,
\partial^2_y \tilde p(t,x,y)=\sum\limits_{k\ge 0} e^{-\lambda_k t} \Psi_k(x) \Psi_k''(y);\\
\partial_x \tilde p(t,x,y)=\sum\limits_{k\ge 0} e^{-\lambda_k t} \Psi_k'(x) \Psi_k(y), \,
\partial^2_x \tilde p(t,x,y)=\sum\limits_{k\ge 0} e^{-\lambda_k t} \Psi_k''(x) \Psi_k(y)
\end{array}
$$
The same technique shows that $\tilde p\in C^{\infty,2}((0,\infty)\times\RR^2)$ and
$$
\partial_t \tilde p(t,x,y)=\sum\limits_{k\ge 0}-\lambda_k e^{-\lambda_k t} \Psi_k(x)\Psi_k(y).
$$
Now, we have
$$
\begin{array}{ll}
\partial_t \tilde p(t,x,y)\hspace{-0.2cm}&=\sum\limits_{k\ge 0}-\lambda_k e^{-\lambda_k t} \Psi_k(x)\Psi_k(y)=
\sum\limits_{k\ge 0} e^{-\lambda_k t} \Psi_k(x)\left(\frac12 \Psi_k''(y)+\widetilde{V}(y) \Psi_k(y)\right)\\
\hspace{-0.2cm}&=\frac12 \partial^2_y \tilde p(t,x,y)+\widetilde{V}(y)\tilde p(t,x,y)=\widetilde{\mathscr{L}}_y \, \tilde p(t,x,y)=
\widetilde{\mathscr{L}}_x\, \tilde p(t,x,y)
\end{array}
$$

Now, we can apply the relation to $p(t,x,y)=e^{\ell(x)\ell(y)}\tilde p(t,x,y)$ to conclude that
$$
\begin{array}{l}
\partial_t p(t,x,y)=e^{\ell(x)-\ell(y)}\partial_t \tilde p(t,x,y),\\
\partial_x p(t,x,y)=e^{\ell(x)-\ell(y)}\left[\partial_x \tilde p(t,x,y)+a(x)\tilde p(t,x,y)\right]\\ 
\partial^2_x p(t,x,y)=e^{\ell(x)-\ell(y)}\left[\partial^2_x \tilde p(t,x,y)+2a(x)\partial_x \tilde p(t,x,y)
+a^2(x)\tilde p(t,x,y)+a'(x)\tilde p(t,x,y)\right].
\end{array} 
$$
So, we get
$$
\partial_t p(t,x,y)=\mathscr{L}_y p(t,x,y),
$$
where $\mathscr{L}(u)=\frac12 u''-au'+Vu$, and $\frac12 u''-au'$ is the generator of $(X_t)$. Also, we get
$$
\partial_t p(t,x,y)=\mathscr{L}^*_x p(t,x,y),
$$
where $\mathscr{L}^*(u)=\frac12 u''+au'+a'u+Vu$.

\begin{corollary} 
\label{cor:2}
Both $p$ and $\tilde p$ satisfy Harnack's inequality
\end {corollary}


\begin{thebibliography}{99}

\bibitem{bansayecloezgabriel} 
\textsc{V. Bansaye,  B. Cloez \& P. Gabriel},
``Ergodic behavior of non-conservative semigroups via generalized {D}oeblin's conditions'', 
Acta Appl. Math. \textbf{166} (2020), p. 29--72. 

\bibitem{bansayecloezmarguet}
\textsc{V. Bansaye,  B. Cloez,  P. Gabriel \& A. Marguet},
``A non-conservative Harris ergodic theorem'',
J. Lond. Math. Soc. (2) \textbf{106} (2022), no.~3, p. 2459--2510.

\bibitem{BCMMS}
\textsc{V. Bansaye, P. Collet, S. Mart\'inez, S. M\'{e}l\'{e}ard \& J. San Mart\'in}, 
``Diffusions from infinity'', 
Trans. Amer. Math. Soc. \textbf{372} (2019), no. ~8, p. 5781--5823.

\bibitem{bansaye_limit_2011}
\textsc{V. Bansaye, J.F. Delmas, L. Marsalle \& V.C. Tran}, 
``Limit theorems for markov processes indexed by continuous time
Galton-Watson trees'', Ann. Appl. Probab. \textbf{21} (2011),
no.~6, p. 2263--2314.

\bibitem{Berezin_Shubin}
\textsc{F.A. Berezin \& M.A. Shubin}, 
\emph{The {S}chr\"{o}dinger equation}, Mathematics and its Applications (Soviet Series), vol. 66, 
Kluwer, 1991.

\bibitem{calvez22}
\textsc{V. Calvez, B. Henry, S. M\'el\'eard \& V.C. Tran}, 
``Dynamics of lineages in adaptation to a gradual environmental change'',  
Ann. H. Lebesgue \textbf{5} (2022), p. 729--777.

\bibitem{cattiaux}
\textsc{P. Cattiaux, P. Collet, A. Lambert, S. Mart\'nez, S. M\'el\'eard \& J. San Mart\'in}.
``Quasi-stationarity distributions and diffusion models in population dynamics'', 
Ann. Probab. \textbf{37} (2009), no. ~5, p. 1926--1969.


\bibitem{champagnatmeleard2007}
\textsc{N. Champagnat \& S. M{\'e}l{\'e}ard},
``Invasion and adaptive evolution for individual-based spatially structured populations'', 
J. Math. Biol. \textbf{55} (2007), no.~2, p. 147--188.

\bibitem{champagnat}
\textsc{N. Champagnat \& D. Villemonais},
``Exponential convergence to quasi-stationary distribution and {$Q$}-process'',
Probab. Theory Related Fields \textbf{164} (2016), no.~1, p. 243--283.

\bibitem{cloez}
\textsc{B. Cloez B.},
``Limit theorems for some branching measure-valued
processes'', 
Adv. in Appl. Probab. \textbf{49} (2017), no.~2, p. 549--580.

\bibitem{cloezgabriel}
\textsc{B. Cloez B. \& P. Gabriel P.}, 
``On an irreducibility type condition for the ergodicity of nonconservative semigroups'',
C. R. Math. Acad. Sci. Paris \textbf{358} (2020), no.~6, p. 733--742.

\bibitem{Davies_Simon} 
\textsc{E.B. Davies \& B. Simon},
``Ultracontractive semigroups and some problems in analysis'',
North-Holland Math. Library vol. 34
North-Holland, Amsterdam 1986, p. 265--280. 

\bibitem{dawsonperkinsAMS}
\textsc{D. Dawson D. \& Perkins E.A.},
Historical processes, 
Mem. Amer. Math. Soc. vol. 93, 1991.

\bibitem{DelMoral}
\textsc{P. Del Moral}, 
Feynman-{K}ac formulae,
Probability and its Applications,
Springer-Verlag, New York, 2004.

\bibitem{Erdelyi}
\textsc{A. Erd\'{e}lyi, W. Magnus, F. Oberhettinger \& F. Tricomi},
Higher transcendental functions. {V}ol. {II}, 
Robert E. Krieger Publishing Co., Inc., Melbourne, FL, 1981.

\bibitem{hardyharris3}
\textsc{R. Hardy \& S.C. Harris}, 
``A spine approach to branching diffusions with applications to
$\mathscr{L}^p$-convergence of martingales'',
S{\'e}minaire de probabilit{\'e}s XLII,
Lecture Notes in Math. \textbf{1979}, 
Springer (2009), p. 281--330.

\bibitem{henry}
\textsc{B. Henry, S. M\'el\'eard \& V.C. Tran},
``Time reversal of spinal processes for linear and non-linear branching processes near stationarity'', 
Electron. J. Probab. \textbf{28}, (2023), Paper no.~32, 27, DOI 10.1214/23-ejp911.

\bibitem{Indritz}
\textsc{J. Indritz}, 
``An inequality for {H}ermite polynomials'', 
Proc. Amer. Math. Soc. \textbf{12} (1961), p. 981--983.

\bibitem{Kac} 
\textsc{M. Kac}, 
``On some connections between probability theory and differential and integral equations'',
Proceedings of the {S}econd {B}erkeley {S}ymposium on {M}athematical {S}tatistics and {P}robability, 1950,
Univ. California Press, 1951, p. 189--215.

\bibitem{karatzasshreve}
\textsc{I. Karatzas \& S.E. Shreve},
Brownian Motion and Stochastic Calculus, Graduate Texts in Mathematics vol. 113, Springer, 1991. 

\bibitem{kurtz1997conceptual}
\textsc{T. Kurtz, R. Lyons, R. Pemantle \& Y. Peres}, 
``A conceptual proof of the {K}esten-{S}tigum theorem for multi-type branching processes'',
Classical and modern branching processes, IMA Vol. Math. Appl. vol. 84, Springer (1997), p. 181--185.

\bibitem{lyons1995conceptual}
\textsc{R. Lyons, R. Pemantle \& Y. Peres}, 
``Conceptual proofs of {$L\log L$} criteria for mean behavior of branching processes'', 
Ann. Probab. \textbf{23} (1995), no.~3, p. 1125--1138.

\bibitem{Marguet1}
\textsc{A. Marguet},
``A law of large numbers for branching {M}arkov processes by the ergodicity of ancestral lineages'',
ESAIM Probab. Stat. \textbf{23} (2019), p. 638--661.

\bibitem{Marguet2}
\textsc{A. Marguet},
``Uniform sampling in a structured branching population'',
Bernoulli \textbf{25} (2019), no.~4A, p. 2649--2695.

\bibitem{meleardtran_suphist}
\textsc{S. M{\'e}l{\'e}ard \& V.C. Tran}
, 
``Nonlinear historical superprocess approximations for population models with past dependence'',
Electron. J. Probab. \textbf{17} (2012), no.~47, 32, DOI 10.1214/EJP.v17-2093.

\bibitem{perkins}
\textsc{E. Perkins}, 
``Measure-valued diffusions and interactions'', 
Proceedings of the {I}nternational {C}ongress of {M}athematicians {V}ol. 1, 2 ({Z}\"{u}rich, 1994), 
Birkh\"{a}user, Basel 1995, p. 1036--1046.

\bibitem{Rozenblum_Shubin_Solomyak}
\textsc{G.V. Rozenblum, M.A. Shubin \& M.Z. Solomyak},
Spectral Theory of Differential Operators, 
Encyclopaedia of Mathematical Sciences vol. 64, Springer-Verlag, Berlin, 1994.

\bibitem{Szego}
\textsc{G. Szeg\"o}, 
Orthogonal polynomials, 
American Mathematical Society Colloquium Publications, 
vol. XXIII, Fourth edition, American Mathematical Society, 1975.

\end{thebibliography}
\end{document}